\newcolumntype{M}[1]{>{\raggedright}m{#1}}
\DeclareMathAlphabet{\mathpzc}{OT1}{pzc}{m}{it}
\newtheorem{theorem}{Theorem}[section]
\newtheorem{lemma}[theorem]{Lemma}
\newtheorem{corollary}[theorem]{Corollary}
\theoremstyle{definition}
\newtheorem{definition}[theorem]{Definition}
\newtheorem{example}[theorem]{Example}
\theoremstyle{remark}
\newtheorem{remark}[theorem]{Remark}
\newtheorem{notation}[theorem]{Notation}
\numberwithin{equation}{section}
\begin{document}

\null\vskip -1cm

\title{Bekka's $(c)$-regularity condition and families of line singularities with constant L\^e numbers}

\author{Christophe \textsc{Eyral} and \"Oznur \textsc{Turhan}}

\address{C. Eyral, Institute of Mathematics, Polish Academy of Sciences, ul. \'Sniadeckich~8, 00-656 Warsaw, Poland}  
\email{cheyral@impan.pl} 
\address{\"O. Turhan, Departement of Mathematics, Galatasaray University, Ortaköy 34357, Istanbul, Turkey and Institute of Mathematics, Polish Academy of Sciences, ul. \'Sniadeckich~8, 00-656 Warsaw, Poland}   
\email{oturhan@impan.pl}

\thanks{}

\subjclass[2020]{14B05, 14B07, 32S05, 32S15, 32S25}

\keywords{Newton non-degenerate line singularities, L\^e numbers, Bekka's $(c)$-regular stratifications, topological equisingularity, L\^e--Ramanujam's problem.}

\begin{abstract}
We show that the natural stratifications arising from certain deformation families of line singularities with constant L\^e numbers satisfy Bekka's $(c)$-regularity condition. As a corollary, we obtain that these families are topologically equisingular. Similar results for families of isolated singularities were established by Abderrahmane.
\end{abstract}

\maketitle

\markboth{Christophe Eyral and \"Oznur Turhan}{Bekka's $(c)$-regularity condition and families of line singularities with constant L\^e numbers}  

\section{Introduction}

Let $f(t,\mathbf{z})$ be an analytic function defined in a neighbourhood of the origin in $\mathbb{C} \times \mathbb{C}^n$, and for each parameter $t$, let $V(f_t)$ denote the germ at $\mathbf{0}$ of the hypersurface in $\mathbb{C}^n$ defined by $f_t(\mathbf{z}) := f(t,\mathbf{z})$. It is well known that for $n \neq 3$, if $\{f_t\}$ is a $\mu$-constant family of isolated singularities, then the corresponding family of hypersurfaces $\{V(f_t)\}$ is topologically equisingular (i.e., the local, ambient topological type of $V(f_t)$ at $\mathbf{0}$ is independent of $t$ for all small $t$), as established in the celebrated L\^e–Ramanujam theorem \cite{LR}. Note that a  related result addressing topological \emph{right-equivalence} of functions, and therefore which goes beyond L\^e–Ramanujam's theorem, was subsequently proved by Timourian in \cite{T}. In both cases, \cite{LR} and \cite{T}, the exclusion of the case $n=3$ is a consequence of the use of the $h$-cobordism theorem.

In \cite{M7,M1,M2,M}, Massey raised the question of whether the L\^e–Ramanujam theorem could be generalized to non-isolated singularities, provided that we replace the Milnor number---which is no longer relevant in this case---by the L\^e numbers. See also Vannier \cite{V1,V2}. In \cite{B}, Fern\'andez de Bobadilla answered this question positively if $n\geq 5$ and each hypersurface $V(f_t)$ has a $1$-dimensional singular locus.
On the other hand, in \cite{Bo1}, he gave a counterexample for the case of a hypersurface with a critical set of dimension $3$. It remains unknown whether the $\mu$-constancy in the case $n=3$ for isolated singularities and the constancy of L\^e numbers in the case $n\leq 4$ for hypersurfaces with $1$-dimensional singular sets are sufficient conditions for topological equisingularity. Hereafter, we shall refer to this question as the \emph{L\^e–Ramanujam problem}, whether for hypersurfaces with isolated singularities or for those whose singular locus is one-dimensional.

For isolated singularities, there are partial results due to Oka \cite{O,O1} and Abderrahmane \cite{A}. They concern Newton non-degenerate singularities, which form an important class of singularities introduced by Kouchnirenko in \cite{K}. More precisely, in \cite{O,O1}, Oka showed that if $f$ defines a $\mu$-constant family $\{f_t\}$ of Newton non-degenerate isolated hypersurface singularities of the form $f_t(\mathbf{z})=f_0(\mathbf{z})+tz_1^{\alpha_1}\cdots z_n^{\alpha_n}$, then the family $\{f_t\}$ is topologically equisingular. Notably, this result holds without requiring that the Newton diagram of $f_t$ is independent of $t$ or that $n\neq 3$. 

In \cite{A}, Abderrahmane generalized Oka's result to all $\mu$-constant families of Newton non-degenerate functions $f_t$ with an isolated singularity at $\mathbf{0}$. In fact, Abderrahmane proved a stronger result. He showed that if $\{f_t\}$ is a $\mu$-constant family of \emph{convenient}, Newton non-degenerate, isolated singularities, then $V(f)\setminus \Sigma f$ is \emph{Bekka $(c)$-regular} over $\Sigma f=\mathbb{C}\times \{\mathbf{0}\}$, where $V(f)$ stands for the hypersurface of $\mathbb{C}\times\mathbb{C}^n$ defined by $f$, and $\Sigma f$ denotes its singular locus. In other words, the natural partition $(V(f)\setminus\Sigma f) \sqcup \Sigma f$ is a Bekka $(c)$-regular stratification of $V(f)$. The $(c)$-regularity condition, introduced by Bekka in \cite {Bekka}, is a slightly less rigid condition than the classical Whitney's $(b)$-regularity condition, in the sense that any $(b)$-regular stratified set is also $(c)$-regular. However, just as Whitney's $(b)$-regular stratifications, Bekka's $(c)$-regular stratifications are also locally topologically trivial along the strata. This latter result, which extends the classical Thom--Mather theorem, was established by Bekka in \cite{Bekka}.
So, in particular, in the above setting, if $V(f)\setminus \Sigma f$ is Bekka $(c)$-regular over $\Sigma f$, then the family of hypersurfaces $\{V(f_t)\}$ is topologically equisingular. 

The assumption that the functions $f_t$ are \emph{convenient} means that the Newton diagram of each $f_t$ intersects all coordinate axes. While this condition plays a critical role in establishing Bekka's $(c)$-regularity---serving to define a suitable  \emph{control function}---it is not required, as rightly noted in \cite{A}, to deduce that the family $\{V(f_t)\}$ is topologically equisingular. Indeed, it is well known that when $f_t$ has an isolated singularity, adding terms of the form $z_i^N$ for sufficiently large $N$ does not alter the embedded topological type of $V(f_t)$.

In the present paper, we extend Abderrahmane's result to certain families of \emph{line singularities}. This new framework requires substituting not only the constancy of the Milnor number with that of the L\^e numbers, but also replacing convenient functions with \emph{quasi-convenient} ones (see Definition \ref{def-adm}). Indeed, by Kouchnirenko's theorem \cite{K}, under the Newton non-degeneracy condition, convenient function-germs always have an isolated singularity. Moreover, in contrast with isolated singularities---which possess neighbourhoods consisting entirely of smooth points, except the singularity itself---a line singularity contains infinitely many singular points that accumulate near it, thereby giving rise to a markedly more intricate topological structure. This added complexity poses substantial challenges for analysis. To manage these difficulties, we narrow our focus to specific families of line singularities, which we refer to henceforth as \emph{admissible families} (see Definition \ref{def-adm}). Our main theorem then asserts that for any admissible family $\{f_t\}$, if the L\^e numbers of the $f_t$'s are independent of $t$ for all sufficiently small $t$, then the natural stratification $(V(f)\setminus\Sigma f) \sqcup \Sigma f$ of $V(f)$ is Bekka $(c)$-regular (see Theorem \ref{mt}).

As a corollary of this and of Bekka's theorem, we conclude that any admissible family with constant L\^e numbers is topologically equisingular (see Corollary \ref{cor}). Note that we neither require the Newton diagram to be independent of $t$ nor assume $n \geq 5$, thereby partially answering the L\^e--Ramanujam problem for our class of hypersurfaces with $1$-dimensional singular sets.
Note that this corollary also extends, in a certain direction, a theorem of Damon \cite{D}, which concerns families of Newton non-degenerate singularities with a \emph{fixed} Newton boundary---on the other hand, Damon's result does not require quasi-convenience and it includes hypersurfaces having singular sets of higher dimensions.

\begin{notation}
Throughout, we use the following notation.
For a function 
\begin{equation*}
f\colon \mathbb{C}\times \mathbb{C}^n \to \mathbb{C}
\end{equation*}
of the variables $(t,\mathbf{z})=(t,z_1,\ldots, z_n)$, we denote by $\partial f$ the gradient of $f$, that is, 
\begin{equation*}
\partial f:=(\overline{\partial f/\partial t},\overline{\partial f/\partial z_1},\ldots,\overline{\partial f/\partial z_n}). 
\end{equation*}
Here, the bar denotes the complex conjugacy. The gradient of $f$ with respect to the variables $\mathbf{z}$ (respectively, the variables $t,z_1$) will be denoted by $\partial_{\mathbf{z}} f$ (respectively, by $\partial_{t,z_1} f$). 

For two functions $f,g\colon \mathbb{C}^n\to\mathbb{C}$ of the variables $\mathbf{z}=(z_1,\ldots,z_n)$, the expression 
\begin{equation*}
\vert f(\mathbf{z})\vert \ll \vert g(\mathbf{z})\vert
\quad\mbox{as}\quad \mathbf{z}\to\mathbf{z}_0
\end{equation*}
means $\lim_{\mathbf{z}\to\mathbf{z}_0} f(\mathbf{z}) / g(\mathbf{z})=0$, while the relation 
\begin{equation*}
\vert f(\mathbf{z})\vert \lesssim \vert g(\mathbf{z})\vert
\end{equation*}
 means that there exists a  constant $K>0$ such that $\vert f(\mathbf{z})\vert \leq K\cdot \vert g(\mathbf{z})\vert$.

For any $\mathbf{z}=(z_1,\ldots,z_n)\in\mathbb{C}^n$ and any $\alpha=(a_1,\ldots,a_n)\in\mathbb{N}^n$, we write 
\begin{equation*}
\mathbf{z}^{\alpha}:=z_1^{a_1}\cdots z_n^{a_n}
\quad\mbox{and}\quad
\bar{\mathbf{z}}^{\alpha}:=\bar z_1^{a_1}\cdots \bar z_n^{a_n}, 
\end{equation*}
where, again, the bar stands for the complex conjugacy. It will also be useful, for a given $\mathbf{z}=(z_1,z_2,\ldots,z_n)$, to use the notation $\tilde{\mathbf{z}}:=(z_2,\ldots,z_n)$. Finally, in order to highlight the relevant variables, we shall occasionally use the notation $\mathbb{C}_{t}\times\mathbb{C}_{\mathbf{z}}^n$ or $\mathbb{C}_{z_1}\times \mathbb{C}_{\tilde{\mathbf{z}}}^{n-1}$, etc.
\end{notation}

\section{Bekka's $(c)$-regularity condition}

\begin{definition}[\mbox{see \cite[\S 2]{Bekka}}]
We say that a stratification\footnote{Here, by a \emph{stratification}, we mean a locally finite partition of $A$ into submanifolds that satisfies the \emph{frontier condition}, that is, if $X,Y\in\mathcal{A}$ are such that $Y\cap\bar X\not=\emptyset$, then $Y\subseteq \bar X$. Here, by $\bar X$, we mean the closure of $X$ in the ambient manifold $M$.} $\mathcal{A}$ of a closed subset $A$ in a manifold $M$ is \emph{Bekka $(c)$-regular} if for any stratum $Y\in \mathcal{A}$, there exists an open neighbourhood $U_Y$ of $Y$ in $M$ together with a $C^1$-function $\rho_Y\colon U_Y\to [0,\infty)$ (called a \emph{control function}) such that $\rho_Y^{-1}(0)=Y$ and the restricted map
\begin{equation*}
\rho_Y\colon U_Y\cap\mbox{star}(Y)\to [0,\infty)
\end{equation*}
is a \emph{Thom map}, that is, for any point $y\in Y$ and any stratum $X\in\mbox{star}(Y)$, the following inclusion holds:
\begin{equation}\label{rcr}
\lim_{X\ni x\to y} T_x((\rho_Y\vert_X)^{-1}(\rho_Y(x))) \supseteq T_yY.
\end{equation}
\end{definition}

Here, by $\mbox{star}(Y)$, we mean the set of all strata $X$ having $Y$ in their closures. As usual, $T_yY$ denotes the tangent space of $Y$ at $y$. Similarly for $T_x((\rho_Y\vert_X)^{-1}(\rho_Y(x)))$. 

\begin{example}[\mbox{see \cite{Trotman}}]
Whitney $(b)$-regular stratifications are Bekka $(c)$-regular, while the converse is false.
\end{example}

\begin{example}[\mbox{see \cite{Bekka}}]
Consider the Brian\c con--Speder family \cite{BS}, which is given by
\begin{equation*}
f(t,z_1,z_2,z_3):=z_3^5+tz_2^6z_3+z_2^7z_1+z_1^{15},
\end{equation*}
 and look at the corresponding stratification 
\begin{equation*}
(V(f)\setminus (\mathbb{C}\times \{\mathbf{0}\})) \sqcup (\mathbb{C}\times \{\mathbf{0}\})
\end{equation*}
 of $V(f)$. It is well known that this stratification does not satisfy the Whitney $(b)$-regularity condition (see \cite{BS}). However, it is Bekka $(c)$-regular (see \cite{Bekka}). More generally, in \cite{Bekka}, Bekka observes that the canonical stratification obtained from a family of weighted homogeneous polynomials with isolated singularities and constant Milnor number is always Bekka $(c)$-regular (see also Abderrahmane \cite[Remark 3.4]{A}).
\end{example}

\section{Admissible families of line singularities}

In this section, we set up the geometric framework in which we will operate and recall some well-known basic facts that may prove useful later on.

\subsection{Families of line singularities}
Let $(t,\mathbf{z})=(t,z_1,\ldots,z_n)$ be linear coordinates for $\mathbb{C}\times \mathbb{C}^n$, and let 
\begin{equation*}\label{lfi}
f\colon \mathbb{C} \times \mathbb{C}^n \rightarrow \mathbb{C},\
(t,\mathbf{z})\mapsto f(t,\mathbf{z}), 
\end{equation*}
be an analytic function defined in a neighbourhood of the origin $(0,\mathbf{0})\in\mathbb{C} \times \mathbb{C}^n$.
As usual, we write $f_t(\mathbf{z}):= f(t,\mathbf{z})$, and we denote by $V(f_t)$ the hypersurface in $\mathbb{C}^n$ defined by the equation $f_t(\mathbf{z})=0$. We assume that $f_t(\mathbf{0})=f(t,\mathbf{0})=0$, so that the origin $\mathbf{0}\in \mathbb{C}^n$ belongs to $V(f_t)$ for all small $t$. 

\begin{definition}
 As in \cite[\S 4]{M7}, we say that $f$ defines a family of hypersurfaces $V(f_t)$ with \emph{line singularities} if for each $t$ small enough, the following two conditions hold:
\begin{enumerate}
\item
the singular locus $\Sigma f_t$ of $V(f_t)$ near $\mathbf{0}$ is given by the $z_1$-axis of $\mathbb{C}^n$;
\item
the restriction of $f_t$ to the hyperplane $V(z_1)$ defined by $z_1=0$ has an isolated critical point at $\mathbf{0}$.
\end{enumerate}
\end{definition}

If $f=\{f_t\}$ is a family of line singularities, then, by \cite[Remark 1.29]{M}, the partition 
\begin{equation*}
(V(f_t)\setminus\Sigma f_t) \sqcup (\Sigma f_t\setminus\{\mathbf{0}\}) \sqcup \{\mathbf{0}\}
\end{equation*}
 of $V(f_t)$ defines a \emph{good stratification} and the hyperplane $V(z_1)$ is a \emph{prepolar slice} of $f_t$ at~$\mathbf{0}$ with respect to this stratification.
In particular, combined with \cite[Proposition~1.23]{M}, this implies that the \emph{L\^e numbers} 
\begin{equation*}
\lambda^0_{f_t,\mathbf{z}} 
\quad\mbox{and}\quad\lambda^1_{f_t,\mathbf{z}} 
\end{equation*}
 of $f_t$ at~$\mathbf{0}$ with respect to the coordinates $\mathbf{z}$ do exist.\footnote{For the definitions and basic properties of good stratifications, prepolarity and L\^e numbers, we refer the reader to \cite{M}. Note that for line singularities, the only possible non-zero L\^e numbers are precisely $\lambda^0_{f_t,\mathbf{z}} $ and $\lambda^1_{f_t,\mathbf{z}} $; all the other L\^e numbers $\lambda^k_{f_t,\mathbf{z}} $ for $2\leq k\leq n-1$ exist and are equal to zero (see \cite{M}).}
Note that the L\^e number $\lambda^1_{f_t,\mathbf{z}}$ is equal to the generic Milnor number $\mathring{\mu}(f_t)$, that is, the Milnor number $\mu(f_t\vert_{V(z_1-a_1)})$ of $f_t\vert_{V(z_1-a_1)}$ at $\mathbf{0}\in\mathbb{C}^{n-1}$ for a sufficiently small $a_1\not=0$.\footnote{The number $\mu(f_t\vert_{V(z_1-a_1)})$ is independent of $a_1$ if $a_1$ is sufficiently small and non-zero.}

\subsection{Admissible families}\label{subsect-AF}

Let $f=\{f_t\}$ be a family of line singularities. Denote by $\Gamma_+(f_t)$ the Newton polyhedron of $f_t$ with respect to the coordinates $\mathbf{z}$, and write $\Gamma(f_t)$ for its Newton boundary, that is, the union of all compact faces of $\Gamma_+(f_t)$ (see \cite{K}). Then, let
\begin{equation*}
\alpha_1,\ldots,\alpha_m\in\mathbb{N}^{n}
\end{equation*}
denote those vertices of $\Gamma(f_{t})$, for $t\not=0$, which correspond to monomials of the form 
\begin{equation*}
z_2^{a_2}\cdots z_n^{a_n}, 
\end{equation*}
that is, each $\alpha_i$ is written as $\alpha_i=(0,a_{i2},\ldots,a_{in})$ for $1\leq i\leq m$. Finally, for each $2\leq j\leq n$, define 
\begin{equation*}
a_j:=\sup_{1\leq i\leq m} a_{ij}.
\end{equation*}

\begin{definition}\label{def-adm}
We say that a family $f=\{f_t\}$ of line singularities is \emph{admissible} if for all small $t$ the following three conditions are satisfied:
\begin{enumerate}
\item[(i)]
$f_t$ is \emph{Newton non-degenerate} (in the sense of Kouchnirenko \cite{K});
\item[(ii)]
if $t\not=0$, then $f_t$ is \emph{quasi-convenient}, that is, it contains a term of the form $c_j(t) z_j^{b_j}$ for each $j\not=1$, where $c_j(t)$ is a non-zero coefficient;
\item[(iii)]
for any monomial of $f_t$ of the form $c(t)z_1^{b_1}z_2^{b_2}\cdots z_n^{b_n}$ with $b_1\not=0$, we have $b_j\geq a_j$ for all $2\leq j\leq n$, while $b_1$ can take any positive value.
\end{enumerate}
\end{definition}

Note that if $f_t$ is Newton non-degenerate and quasi-convenient, then it  cannot contain a term of the form $c_1(t)z_1^{a_1}$, as this would imply that it is convenient, and therefore, with an isolated singularity at $\mathbf{0}$. Also, observe that if $f_0$ is quasi-convenient, so is $f_t$ for all small $t$. 
Finally, note that for each $j\not=1$, the smallest integer $b_j$ that appears in (ii) determines a  vertex $(0,\ldots,0,b_j,0,\ldots,0)$ of the Newton boundary $\Gamma(f_t)$. 

\begin{example}
The family $f=\{f_t\}$ given by 
\begin{equation*}
f_t(z_1,z_2,z_3):=z_2^2+z_3^2+tz_1^2z_2^2z_3^2
\end{equation*}
satisfies conditions (ii) and (iii). We easily check that $f_t$ is Newton non-degenerate and that the singular locus $\Sigma f_t$ of the corresponding hypersurface $V(f_t)$ is given by the $z_1$-axis of $\mathbb{C}^3$. Moreover, since  the origin is an isolated critical point of the restricted maps $f_t\vert_{V(z_1)}$, the $f_t$'s form an \emph{admissible family} of line singularities.
\end{example}

\section{Statements of the results}

Again, let $f=\{f_t\}$ be a family of line singularities.
To state our main result, we first observe that if the L\^e numbers $\lambda^0_{f_t,\mathbf{z}} $ and $\lambda^1_{f_t,\mathbf{z}} $ are independent of $t$, then, by \cite[Theorem 6.5]{M}, the $t$-axis (namely, $\mathbb{C}\times\{\mathbf{0}\}$) satisfies \emph{Thom's $(a_f)$ condition} at the origin with respect to the ambient stratum. This means that if $\{(t_k,\mathbf{z}_k)\}$ is a sequence of points in $(\mathbb{C} \times \mathbb{C}^n)\setminus \Sigma f$ such that $(t_k,\mathbf{z}_k)\to (0,\mathbf{0})$ and $T_{(t_k,\mathbf{z}_k)}V(f-f(t_k,\mathbf{z}_k))\to T$, then $\mathbb{C}\times\{\mathbf{0}\}=T_{(0,\mathbf{0})}(\mathbb{C}\times\{\mathbf{0}\})\subseteq T$. In particular, this implies that the singular locus $\Sigma f$ of the hypersurface $V(f)\subseteq \mathbb{C}\times\mathbb{C}^n$ is smooth and given by 
\begin{equation*}
\Sigma f=\mathbb{C}_t\times\mathbb{C}_{z_1}\times\{\mathbf{0}\}=\mathbb{C}_t\times(\mbox{$z_1$-axis of $\mathbb{C}^n$})
\end{equation*}
 (see \cite[Lemma 44]{B}). 

Our main result is stated as follows.

\begin{theorem}\label{mt}
Assume that $f=\{f_t\}$ is an admissible family of line singularities. If the L\^e numbers $\lambda^0_{f_t,\mathbf{z}} $ and $\lambda^1_{f_t,\mathbf{z}} $ are independent of $t$ for all sufficiently small $t$, then the natural stratification $(V(f)\setminus \Sigma f) \sqcup\Sigma f$ of the closed subset $V(f)\subseteq\mathbb{C}\times\mathbb{C}^n$ is Bekka $(c)$-regular. 
\end{theorem}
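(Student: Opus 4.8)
The strategy is to exhibit an explicit control function $\rho$ on a neighbourhood $U$ of $\Sigma f$ in $\mathbb{C}\times\mathbb{C}^n$ and to verify the Thom map condition \eqref{rcr} for the single pair of strata $X=V(f)\setminus\Sigma f$ and $Y=\Sigma f$. Since $\Sigma f=\mathbb{C}_t\times\mathbb{C}_{z_1}\times\{\mathbf 0\}$ is given by the vanishing of $\tilde{\mathbf z}=(z_2,\ldots,z_n)$, the naive choice is $\rho(t,\mathbf z)=\|\tilde{\mathbf z}\|^2=\sum_{j=2}^n|z_j|^2$; this is the quasi-convenient analogue of the control function Abderrahmane uses for convenient isolated singularities, and the quasi-convenience hypothesis (ii) is exactly what makes it the correct one near the $z_1$-axis. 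I would first record that $\rho^{-1}(0)=Y$ and that $\rho$ is $C^1$ (indeed real-analytic), and reduce the Thom condition, via the standard linear-algebra reformulation, to a gradient estimate: for every sequence $X\ni(t_k,\mathbf z_k)\to(0,z_1^0,\mathbf 0)\in Y$, the limit of the tangent spaces $T_{(t_k,\mathbf z_k)}\bigl((\rho|_X)^{-1}(\rho(t_k,\mathbf z_k))\bigr)$ contains $T_yY=\mathbb{C}_t\times\mathbb{C}_{z_1}\times\{\mathbf 0\}$. Writing the fibre of $\rho|_X$ as the common zero set of $f$ and $\rho-\rho(t_k,\mathbf z_k)$, its conormal at $(t_k,\mathbf z_k)$ is spanned by $\partial f$ and $\partial\rho$, and the desired inclusion follows once one shows that every limiting covector in this span is orthogonal to $\mathbb{C}_t\times\mathbb{C}_{z_1}\times\{\mathbf 0\}$, i.e. that the $t$- and $z_1$-components of $\partial f$ and of $\partial\rho$ become negligible compared with the $\tilde{\mathbf z}$-components. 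For $\partial\rho$ this is immediate, since $\partial\rho$ has no $t$- or $z_1$-component at all; the whole problem is therefore the estimate
\begin{equation*}
\Bigl|\frac{\partial f}{\partial t}(t_k,\mathbf z_k)\Bigr|+\Bigl|\frac{\partial f}{\partial z_1}(t_k,\mathbf z_k)\Bigr|\ll \bigl\|\partial_{\tilde{\mathbf z}}f(t_k,\mathbf z_k)\bigr\|\quad\text{as }k\to\infty.
\end{equation*}

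The next step is to reduce this gradient estimate to two inputs that are already available. The constancy of the L\^e numbers gives, via \cite[Theorem 6.5]{M}, Thom's $(a_f)$ condition of $\Sigma f=\mathbb{C}_t\times\mathbb{C}_{z_1}\times\{\mathbf 0\}$ at the origin with respect to the ambient stratum; translated into gradient language this says precisely that $|\partial f/\partial t|+|\partial f/\partial z_1|\ll \|\partial_{\tilde{\mathbf z}}f\|$ along any sequence in $V(f)\setminus\Sigma f$ approaching the origin (this is how $(a_f)$ along a line is usually re-expressed). This handles the approach to the origin $(0,\mathbf 0)$ itself. For the approach to a point $(0,z_1^0,\mathbf 0)$ with $z_1^0\neq 0$, one argues separately: away from the origin the family $\{f_t\}$ restricts, transversally to the $z_1$-line, to a family with an \emph{isolated} critical point, and the constancy of $\lambda^1_{f_t,\mathbf z}=\mathring\mu(f_t)$ together with the upper semicontinuity and conservation of the Milnor number forces $\mu$-constancy of that transverse family; one can then either invoke Abderrahmane's isolated-singularity result directly on the transverse slices, or re-run the same $(a_f)$-type argument there, to conclude that $\mathbb{C}_t\times\mathbb{C}_{z_1}$ is in the limit of tangent spaces at every point of $\Sigma f$. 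Thus the two L\^e numbers being constant is used in two qualitatively different ways — $\lambda^0$ (jointly with $\lambda^1$) at the origin through Massey's $(a_f)$ theorem, and $\lambda^1$ alone along the rest of the line through reduction to the isolated case.

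There remains the genuinely new part, which is also where I expect the main obstacle: to make the $(a_f)$-to-Thom-map passage work we must control $\partial f$ not merely along $V(f)$ but along the $\rho$-fibres inside $V(f)$, and more importantly we must be sure that the Newton-boundary geometry of an \emph{admissible} family actually produces a control function whose fibres behave well — this is exactly why conditions (i)–(iii) are imposed. Concretely, I would use Newton non-degeneracy (i) together with Kouchnirenko-type (\L ojasiewicz) estimates to compare $\|\partial_{\tilde{\mathbf z}}f_t\|$ with a monomial lower bound dictated by the vertices $\alpha_1,\ldots,\alpha_m$ on the $\tilde{\mathbf z}$-coordinate subspace, and condition (iii) — that every monomial of $f_t$ divisible by $z_1$ carries each $z_j$ ($j\neq 1$) to a power at least $a_j$ — to show that $\partial f/\partial z_1$ is divisible, up to bounded factors, by $\prod_{j\ge 2} z_j^{a_j-[\,j\neq\text{the active vertex}\,]}$ and hence is dominated by $\|\partial_{\tilde{\mathbf z}}f\|$; the $\partial f/\partial t$ term is treated the same way since admissibility is required for all small $t$ and the $t$-derivative of each monomial inherits its $\tilde{\mathbf z}$-divisibility. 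The delicate point, and the one I would spend the most care on, is the uniformity of all these estimates as $t\to 0$ simultaneously with $\mathbf z\to\mathbf 0$: one cannot simply fix $t$ and quote the isolated-singularity statements, because the Newton boundary is allowed to jump at $t=0$. The way around this is to work on the total space — treat $f$ as a single function on $\mathbb{C}\times\mathbb{C}^n$ with $\Sigma f$ as a smooth one-parameter line — and to derive the \L ojasiewicz inequalities from constancy of the L\^e numbers (which precisely prevents the relevant polar/\L e cycles from concentrating mass at the origin as $t\to 0$) rather than from the Newton data of each individual $f_t$; the Newton data then only needs to supply the \emph{form} of the estimate on each slice, with the $t$-uniform constant coming from L\^e-number constancy. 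Once this uniform gradient estimate is in hand, the Thom map condition \eqref{rcr} follows formally, and Bekka $(c)$-regularity of $(V(f)\setminus\Sigma f)\sqcup\Sigma f$ is established.
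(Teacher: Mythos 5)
There is a genuine gap, and it sits exactly at the point you flag as ``the standard linear-algebra reformulation.'' Reducing the Thom-map condition for $\rho$ to the single estimate $|\partial f/\partial t|+|\partial f/\partial z_1|\ll\|\partial_{\tilde{\mathbf z}}f\|$ is not correct: the conormal of a fibre of $\rho|_X$ is spanned by $\partial f$ \emph{and} $\partial_{t,\mathbf z}\rho$, and a limiting unit covector $a\,\partial f+b\,\partial_{t,\mathbf z}\rho$ can have $|a\,\partial f+b\,\partial_{t,\mathbf z}\rho|$ far smaller than $|a|\,|\partial f|$ when the two gradients are nearly collinear. What must actually be proved is
$|\partial_{t,z_1}f|\ll\inf_{\eta\in\mathbb{C}}|\partial f+\eta\,\partial_{t,\mathbf z}\rho| = |\partial f\wedge\partial_{t,\mathbf z}\rho|/|\partial_{t,\mathbf z}\rho|$
(equivalently, the two conditions of \cite[Theorem 1]{A2}), and producing this lower bound on $\inf_\eta|\partial f+\eta\,\partial_{t,\mathbf z}\rho|$ is the entire content of the proof. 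The paper obtains it by introducing the auxiliary multiparameter deformation $g(\mathbf s,t,\mathbf z)=f(t,\mathbf z)+\sum_i s_i\mathbf z^{\alpha_i}$, proving that its L\^e numbers $\lambda^0_{g_{\mathbf s,t},\mathbf z},\lambda^1_{g_{\mathbf s,t},\mathbf z}$ are defined and constant (this is where Newton non-degeneracy enters, via Kouchnirenko's theorem and the Iomdine--L\^e--Massey formula applied to $g_{\mathbf s,t}+z_1^a$), invoking Massey's multiparameter $(a_g)$ theorem, and then running a Parusi\'nski-type argument to convert $(a_g)$ into $\sum_i|\mathbf z^{\alpha_i}|\ll\inf_\eta|\partial f+\eta\,\partial_{t,\mathbf z}\rho|$; condition (iii) of admissibility supplies $|\partial_{t,z_1}f|\lesssim\sum_i|\mathbf z^{\alpha_i}|$. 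None of this machinery appears in your plan, and the \L ojasiewicz-type slice estimates you propose instead would at best bound $|\partial_{t,z_1}f|$ against $\|\partial_{\tilde{\mathbf z}}f\|$, which is the insufficient version.

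Two further points. First, your control function $\rho=\|\tilde{\mathbf z}\|^2$ is not the one that works: the paper (following Abderrahmane) takes $\rho=\sum_i\mathbf z^{\alpha_i}\bar{\mathbf z}^{\alpha_i}$ over the Newton-boundary vertices supported on $\{z_1=0\}$, and the deformation $g$ is built precisely from these monomials so that $\partial_{t,\mathbf z}\rho=\sum_i\bar{\mathbf z}^{\alpha_i}\partial_{t,\mathbf z}(\mathbf z^{\alpha_i})$ can be absorbed into the $(a_g)$ estimate. With the naive distance function the $(c)$-condition degenerates into an essentially Whitney-type condition, which is exactly what is known to fail for such families (Brian\c con--Speder) and what Bekka's notion is designed to avoid; quasi-convenience is needed only to guarantee $\rho^{-1}(0)=\Sigma f$ for the monomial $\rho$. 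Second, the $(a_f)$ statement you quote from \cite[Theorem 6.5]{M} concerns the $t$-axis $\mathbb{C}_t\times\{\mathbf 0\}$ and therefore controls only $\partial_t f$ against the full gradient $\partial_{\mathbf z}f$; it says nothing about $\partial_{z_1}f$ versus $\partial_{\tilde{\mathbf z}}f$, so even the weak estimate you reduce to is not a consequence of the input you cite. (A minor simplification you miss: by \cite[Theorem 2.4]{BK}, $(c)$-regularity need only be checked at the origin, since condition $(m)$ is global and Whitney $(a)$ holds generically along $\Sigma f$; your separate treatment of the points $(0,z_1^0,\mathbf 0)$ with $z_1^0\neq0$ is unnecessary.)
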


\begin{remark}\label{strat-extension}
If the stratification $\{V(f)\setminus \Sigma f,\Sigma f\}$ of the hypersurface $V(f)$ is Bekka $(c)$-regular, then it naturally extends to a Bekka $(c)$-regular stratification of the whole space $\mathbb{C}\times \mathbb{C}^{n}$ by adding the open stratum $(\mathbb{C}\times\mathbb{C}^{n})\setminus V(f)$.
\end{remark}

Theorem \ref{mt} extends to \emph{admissible families of line singularities} a similar result by Abderrahmane  which concerns families of \emph{convenient Newton non-degenerate isolated singularities} (see Theorem~1.1, together with its proof, and the end of the introduction in \cite{A}). Let us emphasize that quasi-convenience---just as convenience in the case of isolated singularities---serves solely to guarantee the existence of a suitable control function $\rho$, specifically ensuring that $\rho^{-1}(0) = \Sigma f$.

Combined with Bekka's generalization of the Thom--Mather theorem (see \cite[\S 3, Th\'eo\-r\`eme 1]{Bekka}) and a result of King \cite[Theorem~1]{King}, Theorem \ref{mt} implies the following corollary, which provides a version of Abderrahmane's result \cite[Theorem 1.1]{A} for admissible families of line singularities.

\begin{corollary}\label{cor}
Again, assume that $f=\{f_t\}$ is an admissible family of line singularities such that the L\^e numbers $\lambda^0_{f_t,\mathbf{z}} $ and $\lambda^1_{f_t,\mathbf{z}} $ are independent of $t$ for all small $t$. Then the family of hypersurfaces $\{V(f_t)\}$ is topologically equisingular.
\end{corollary}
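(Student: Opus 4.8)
The plan is to derive Corollary \ref{cor} from Theorem \ref{mt} by combining Bekka's version of the Thom--Mather first isotopy lemma with King's criterion for topological equisingularity. First I would invoke Theorem \ref{mt} to conclude that the stratification $\{V(f)\setminus\Sigma f,\ \Sigma f\}$ of $V(f)\subseteq\mathbb{C}\times\mathbb{C}^n$ is Bekka $(c)$-regular, and then use Remark \ref{strat-extension} to extend it to a Bekka $(c)$-regular stratification $\mathcal{S}$ of the whole ambient space $\mathbb{C}\times\mathbb{C}^n$ by adjoining the open stratum $(\mathbb{C}\times\mathbb{C}^n)\setminus V(f)$. Since $\Sigma f=\mathbb{C}_t\times(\text{$z_1$-axis})$ is a stratum, and the projection $\pi\colon \mathbb{C}\times\mathbb{C}^n\to\mathbb{C}$, $(t,\mathbf{z})\mapsto t$, restricts to a proper submersion on $\Sigma f$ (after shrinking to a suitable neighbourhood of the origin and a small disc $D\subseteq\mathbb{C}$), the key point is that $\pi$ restricted to $\mathcal{S}$ is a stratified submersion in Bekka's sense.

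Next I would apply Bekka's generalization of the Thom--Mather first isotopy lemma (\cite[\S 3, Th\'eor\`eme 1]{Bekka}): a proper stratified submersion from a $(c)$-regular stratified set to a manifold is a locally trivial fibration, and moreover the trivialization can be taken to respect the stratification. Concretely, one takes $\pi$ on the stratified set $(\mathbb{C}\times B_\epsilon)\cap(\text{total space})$, where $B_\epsilon$ is a small closed ball in $\mathbb{C}^n$ centered at $\mathbf{0}$; properness is arranged by the usual conical-structure argument, choosing $\epsilon$ small first and then $D$ small, so that no stratum exits through the ``sides'' $\mathbb{C}\times\partial B_\epsilon$, or else by intersecting with $\partial B_\epsilon$ to control the boundary. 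Bekka's isotopy lemma then yields a stratum-preserving homeomorphism $\Phi\colon (\mathbb{C}\times B_\epsilon)\cap(\cdots)\ \xrightarrow{\ \sim\ }\ D\times\big((\{0\}\times B_\epsilon)\cap(\cdots)\big)$ over $D$. Restricting $\Phi$ to the stratum-union $V(f)\cup\Sigma f$, and over each $t\in D$ to the fibre, gives for every small $t$ an ambient homeomorphism $(B_\epsilon, V(f_t), \mathbf{0})\cong (B_\epsilon, V(f_0), \mathbf{0})$; this is precisely the statement that the local ambient topological type of $V(f_t)$ at $\mathbf{0}$ is independent of $t$, i.e.\ topological equisingularity.

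The role of King's theorem \cite[Theorem~1]{King} in the argument is to upgrade ``there is an ambient homeomorphism pair for each fixed $t$'' — which a priori might only be continuous in $(t,\mathbf{z})$ jointly in a weak sense — into the precise notion of topological equisingularity of the family as usually formulated, and in particular to handle the basepoint: King's criterion shows that a continuous family of ambient homeomorphisms preserving the hypersurface and fixing the distinguished point, coming from a locally trivial stratified fibration over the parameter space, is enough to conclude that the embedded topological types of the germs $(V(f_t),\mathbf{0})$ coincide. (Alternatively, King's theorem can be used to directly reduce topological equisingularity to the existence of a $(c)$-regular, $\pi$-submersive stratification, which is exactly what Theorem \ref{mt} plus Remark \ref{strat-extension} provide.)

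The main obstacle is purely a matter of book-keeping rather than of substance: one must verify the properness hypothesis of Bekka's isotopy lemma, which requires choosing the radius $\epsilon$ of the Milnor ball and then the radius of the parameter disc $D$ in the correct order, and checking that the stratification restricted to $\mathbb{C}\times \overline{B_\epsilon}$ (with its boundary strata $\mathbb{C}\times \partial B_\epsilon$ adjoined and refined) remains $(c)$-regular and $\pi$-submersive — this is the standard ``conic structure of analytic sets / transversality of small spheres'' argument, and it goes through here because $\Sigma f$ is a smooth stratum transverse to the spheres $\partial B_\epsilon$ for $\epsilon$ small. Everything else is a direct citation of \cite{Bekka} and \cite{King}.
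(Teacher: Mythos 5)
Your overall strategy---apply Bekka's isotopy lemma to the projection $\pi(t,\mathbf{z})=t$ on the extended $(c)$-regular stratification---is a genuinely different route from the paper's, and it has a gap at exactly the point where the paper's proof does real work. The trivialization produced by the isotopy lemma is only \emph{stratum}-preserving, and the point $\mathbf{0}\in\mathbb{C}^n$ is not distinguished by the stratification: it is an interior point of the two-(complex-)dimensional stratum $\Sigma f=\mathbb{C}_t\times\mathbb{C}_{z_1}\times\{\mathbf{0}\}$. Hence the fibrewise homeomorphism $\phi_t\colon B_\epsilon\to B_\epsilon$ you obtain sends $V(f_t)$ to $V(f_0)$ and the $z_1$-axis to the $z_1$-axis, but a priori sends $\mathbf{0}$ to some other point $p_t$ of the $z_1$-axis. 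You therefore only get $(V(f_t),\mathbf{0})\cong(V(f_0),p_t)$, and to finish you must still show that the germ of $V(f_0)$ is constant along its singular line---which is not automatic, and is essentially the statement the paper extracts from King's theorem. Your homeomorphism of triples $(B_\epsilon,V(f_t),\mathbf{0})\cong(B_\epsilon,V(f_0),\mathbf{0})$ is asserted, not proved. (The gap is closable: in the controlled-vector-field construction one may choose the lift of $\partial/\partial t$ to be exactly $\partial/\partial t$ on the bottom stratum $\Sigma f$, so the flow fixes $z_1$ and hence $\mathbf{0}$; but this has to be said and justified, and you would also need the proper-submersion version of Bekka's theorem rather than the local triviality along strata that \cite[\S 3, Th\'eor\`eme 1]{Bekka} is invoked for, in addition to the boundary/properness bookkeeping you acknowledge.)

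Relatedly, your account of the role of King's theorem does not match what it is used for. In the paper's proof, Bekka's theorem gives local topological triviality along the stratum $\Sigma f$; since $\Sigma f$ has complex dimension $2$, this yields equisingularity of the two-parameter family of \emph{transverse slices} $V(f_{t,z_1})\subseteq\mathbb{C}^{n-1}$, not of the germs $V(f_t)\subseteq\mathbb{C}^n$ at $\mathbf{0}$. King's theorem is then applied, for each fixed $t_0$, to the one-parameter family $\{f_{t_0,z_1}\}_{z_1}$ to upgrade equisingularity of the transverse slices into topological right-triviality, i.e.\ a genuine local product structure $V(f_{t_0})\cong D\times V(f_{t_0,0})$ near $\mathbf{0}$; combining these product structures for two parameters $t_0$ and $t'_0$, together with the ambient homeomorphism of slices coming from the two-parameter equisingularity, gives the corollary. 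So King's theorem is the step that converts information transverse to the singular line into information about the germ at the basepoint; it is not a device for upgrading a family of ambient homeomorphisms into equisingularity (that implication runs in the opposite direction).
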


\begin{proof}
By Theorem \ref{mt} and Remark \ref{strat-extension}, the stratification 
\begin{equation*}
((\mathbb{C}\times \mathbb{C}^n)\setminus V(f)) \sqcup (V(f)\setminus \Sigma f) \sqcup\Sigma f
\end{equation*} 
 of $\mathbb{C}\times \mathbb{C}^n$ is Bekka $(c)$-regular. Therefore, by Bekka's generalization of Thom--Mather's theorem (see \cite[\S 3, Th\'eor\`eme 1]{Bekka}), $V(f)$ is locally topologically trivial along $\Sigma f$. In other words, the $2$-parameter family $\{V(f_{t,z_1})\}_{t,z_1}$ of hypersurface-germs  $V(f_{t,z_1})\subseteq\mathbb{C}^{n-1}$ is topologically equisingular. In particular, for any small enough fixed $t_0$, the $1$-parameter family $\{V(f_{t_0,z_1})\}_{z_1}$ is topologically equisingular too. 
Thus, by a result of King \cite[Theorem 1]{King}\footnote{For $n\not=4$, we may alternatively invoke Timourian's result \cite[Theorem p.~437]{T}.}, the family $\{f_{t_0,z_1}\}_{z_1}$ of functions is topologically trivial with respect to the \emph{right-equivalence}. That is, for each fixed $z_1^0$, there exist a neighbourhood $U\subseteq \mathbb{C}_{z_1}\times\mathbb{C}^{n-1}_{\tilde{\mathbf{z}}}$ of $(z_1^0,\mathbf{0})$, and neighbourhoods $D\subseteq \mathbb{C}_{z_1}$ and $W\subseteq \mathbb{C}^{n-1}_{\tilde{\mathbf{z}}}$ of $z_1^0$ and $\mathbf{0}$, respectively, together with a homeomorphism $h\colon U\to D\times W$ such that
\begin{equation*}
h(V(f_{t_0})\cap U)=D\times (V(f_{t_0,z_1^0})\cap W).
\end{equation*} 
In particular, this holds for $z_1^0=0$. Similarly, for another small $t'_0$, there also exists such a local, ambient homeomorphism sending the hypersurface-germ $V(f_{t'_0})$ onto the germ at $(0,\mathbf{0})$ of $\mathbb{C}\times V(f_{t'_0,0})$. Since the germs $(V(f_{t_0,0}),\mathbf{0})$ and $(V(f_{t'_0,0}),\mathbf{0})$ are homeomorphic via an ambient homeomorphism of $\mathbb{C}^{n-1}$ for sufficiently small values of $t_0$ and $t'_0$, it follows that
$(V(f_{t_0}),\mathbf{0})$ and $(V(f_{t'_0}),\mathbf{0})$
 are likewise homeomorphic via an ambient homeomorphism of $\mathbb{C}^n$.
\end{proof}

Note that Corollary \ref{cor} applies to any admissible families of line singularities with constant L\^e numbers in $\mathbb{C}^n$ ($n\geq 2$), without any other restriction on $n$, and hence partially answering the L\^e--Ramanujam problem for our class of hypersurfaces with $1$-dimensional singular sets.
Also, observe that Corollary \ref{cor} includes families with possibly \emph{non-constant} Newton boundaries, thereby extending, in a specific direction, the scope of a result by Damon \cite[p.~273]{D}, which addresses families of Newton non-degenerate singularities—whether or not they are admissible or even $1$-dimensional---with a \emph{fixed} Newton boundary.
Here, note that if $\Gamma(f_t)$ is independent of $t$ for all small $t$, including $t=0$, and if $f_t$ is Newton non-degenerate for all such $t$'s, then, by \cite[Corollary 5.2]{EOR}, the L\^e numbers $\lambda^0_{f_t,\mathbf{z}}$ and $\lambda^1_{f_t,\mathbf{z}}$ are independent of $t$. A related result addressing \emph{Whitney equisingularity} is presented in \cite[Theorem 3.8]{EO}, applicable to families of Newton non-degenerate functions that are ``uniformly locally tame'' (see \cite[Definition 3.7]{EO}) and maintain a constant ``non-compact Newton boundary'' (in particular, a constant Newton boundary).

\begin{example}
We easily check that the family $f=\{f_t\}$ given by 
\begin{equation*}
f(t,z_1,z_2,z_3) = z_2^4+z_3^3+t\, z_1^2 z_2^4 z_3^3
\end{equation*}
forms an admissible family of line singularities such that $\lambda^0_{f_t,\mathbf{z}}=0$ and $\lambda^1_{f_t,\mathbf{z}}=6$ for all small $t$. Then, by Theorem \ref{mt} and Remark \ref{strat-extension}, the stratification 
\begin{equation*}
\{(\mathbb{C}\times\mathbb{C}^n)\setminus V(f),V(f)\setminus \Sigma f,\Sigma f\}
\end{equation*}
 is Bekka $(c)$-regular, and by Corollary \ref{cor}, the family $\{V(f_t)\}$ is topologically equisingular.
\end{example}

\begin{example}
Now, look at the family $f=\{f_t\}$ given by
\begin{equation*}\label{ex2}
f(t,z_1,z_2,z_3) = z_2^2+z_3^2+z_1z_2^2z_3^2+t\, z_1 z_2^2. 
\end{equation*}
Again, we readily verify that this defines an admissible family of line singularities. Regarding the Lê numbers, a direct computation yields $\lambda^0_{f_t,\mathbf{z}}=0$ and $\lambda^1_{f_t,\mathbf{z}}=1$ for all small $t$.
Consequently, Theorem~\ref{mt} and Corollary~\ref{cor} are once again applicable.
\end{example}

\section{Proof of Theorem \ref{mt}}

The proof presented below is inspired by that of \cite[Theorem~1.1]{A}.

Consider the control function $\rho\colon \mathbb{C}\times\mathbb{C}^n\to\mathbb{C}$ defined by
\begin{equation*}
(t,\mathbf{z})\mapsto
\rho(t,\mathbf{z}):=\sum_{i=1}^m\mathbf{z}^{\alpha_i}\bar{\mathbf{z}}^{\alpha_i},
\end{equation*}
where, let us recall, $\alpha_1,\ldots,\alpha_m\in\mathbb{N}^{n}$ are the vertices of $\Gamma(f_{t})$, $t\not=0$, corresponding to monomials of the form $z_2^{a_2}\cdots z_n^{a_n}$ (see \S \ref{subsect-AF}).
As the $f_t$'s, $t\not=0$, are quasi-convenient and do not contain any term of the form $c_1(t)z_1^{a_1}$ (see ibid.), we~have
\begin{equation*}
\rho^{-1}(0)=\mathbb{C}_t\times\mathbb{C}_{z_1}\times\{\mathbf{0}\}=\Sigma f. 
\end{equation*}

To prove the theorem, it is enough to show that the pair of strata $(V(f)\setminus\Sigma f,\Sigma f)$ is $(c)$-regular at $(0,\mathbf{0})$ with respect to $\rho$, which means that the relation \eqref{rcr} holds true for $Y=\Sigma f$, $X=V(f)\setminus\Sigma f$ and $y=(0,\mathbf{0})$. Indeed, by \cite[Theorem 2.4]{BK}, $(X,Y)$ is Bekka $(c)$-regular at $y$ if and only if it is Whitney $(a)$-regular at $y$ and satisfies the condition $(m)$ of \cite[Definition 2.3]{BK}. As the latter condition is globally defined in a neighbourhood of $Y$, and since the Whitney $(a)$-regularity holds at each point outside a strict closed analytic subset of $Y$ (see \cite[Lemma~19.3]{Whitney}), it follows that if $(X,Y)$ is Bekka $(c)$-regular at $(0,\mathbf{0})$, then it is also Bekka $(c)$-regular at each point in a neighbourhood of $(0,\mathbf{0})$ in $Y$.

\subsection*{Step 0 -- Strategy of the proof} 

Consider the multi-parameter deformation $g(\mathbf{s},t,\mathbf{z})$ of $f_0$ defined by
\begin{equation*}
(\mathbf{s},t,\mathbf{z})\in \mathbb{C}^m\times\mathbb{C}\times\mathbb{C}^n
\mapsto g(\mathbf{s},t,\mathbf{z}):=f(t,\mathbf{z})+\sum_{i=1}^m s_i\, \mathbf{z}^{\alpha_i},
\end{equation*} 
and put $g_{\mathbf{s},t}(\mathbf{z}):=g(\mathbf{s},t,\mathbf{z})$. (Of course, $\mathbf{s}=(s_1,\ldots,s_m)$.) As usual,  denote by $V(g_{\mathbf{s},t})$ the corresponding  hypersurface of $\mathbb{C}^n$, and  write $\Sigma g_{\mathbf{s},t}$ for its singular locus. 
Then the proof decomposes into three steps.
In Step 1, we show that $\dim_{\mathbf{0}}\Sigma g_{\mathbf{s},t}\leq 1$, and that the L\^e numbers $\lambda^0_{g_{\mathbf{s},t},\mathbf{z}}$ and $\lambda^1_{g_{\mathbf{s},t},\mathbf{z}}$ 
of $g_{\mathbf{s},t}$ at $\mathbf{0}$ with respect to the coordinates $\mathbf{z}$ exist and remain constant for all $(\mathbf{s},t)$ sufficiently close to $(\mathbf{0},0)$. By a general
version of the L\^e--Saito theorem for multi-parameter families of non-isolated  singularities due to Massey \cite{M}, this implies that $\mathbb{C}_{\mathbf{s}}^m \times \mathbb{C}_t \times \{\mathbf{0}\}$ satisfies Thom's $(a_g)$ condition. In step~2, we observe that condition (iii) of Definition \ref{def-adm} implies
\begin{equation*}
\vert \partial_{t,z_1} f(t,\mathbf{z})\vert \lesssim
\sum_{i=1}^m \vert \mathbf{z}^{\alpha_i}\vert
\end{equation*}
near the origin---a relation which, combined with the Thom $(a_g)$ condition, implies 
\begin{equation*}
\vert \partial_{t,z_1} f(t,\mathbf{z})\vert \ll
\frac{\vert\partial f(t,\mathbf{z})\wedge \partial_{t,\mathbf{z}}\rho(t,\mathbf{z})\vert}{\vert\partial_{t,\mathbf{z}}\rho(t,\mathbf{z})\vert} 
\end{equation*}
as $V(f)\setminus \Sigma f \ni (t,\mathbf{z})\to (0,\mathbf{0})$. Finally, in Step 3, we conclude that this relation, in turn, implies
\begin{align*}
\frac{\vert\langle\partial_{t,\mathbf{z}}\rho(t,\mathbf{z}), \partial f(t,\mathbf{z})\rangle\vert} {\vert\partial f(t,\mathbf{z})\vert^2}
\, \vert\partial_{t,z_1} f(t,\mathbf{z})\vert & \ll 
\vert \partial_{t,\mathbf{z}}\rho_{\vert V(f)\setminus \Sigma f}(t,\mathbf{z})\vert
\quad\mbox{and}\\
\vert \partial_{t,z_1} f(t,\mathbf{z})\vert & \ll 
\vert \partial_{\tilde{\mathbf{z}}} f(t,\mathbf{z})\vert
\end{align*}
as $V(f)\setminus \Sigma f \ni (t,\mathbf{z})\to (0,\mathbf{0})$---two conditions which, by another theorem of Abderrahmane (see \cite[Theorem 1]{A2}), are equivalent to saying that $V(f)\setminus \Sigma f$ is Bekka $(c)$-regular over $\Sigma f$ at the point $(0,\mathbf{0})$ with respect to the control function $\rho$.

\subsection*{Step 1 -- Constancy of the L\^e numbers $\lambda^0_{g_{\mathbf{s},t},\mathbf{z}}$ and $\lambda^1_{g_{\mathbf{s},t},\mathbf{z}}$} 

First of all, we show that $\lambda^0_{g_{\mathbf{s},t},\mathbf{z}}$ and $\lambda^1_{g_{\mathbf{s},t},\mathbf{z}}$ are properly defined.

\begin{lemma}\label{NNDd1}
There exists a neighbourhood $W$ of $(\mathbf{0},0)$ in $\mathbb{C}_\mathbf{s}^m\times\mathbb{C}_t$ such that for any $(\mathbf{s},t)\in W$, the following two properties hold true:
\begin{enumerate}
\item
$\dim_{\mathbf{0}}\Sigma g_{\mathbf{s},t}\leq 1$, where $\dim_{\mathbf{0}}\Sigma g_{\mathbf{s},t}$ is the dimension  of $\Sigma g_{\mathbf{s},t}$ at $\mathbf{0}$;
\item
the L\^e numbers $\lambda^0_{g_{\mathbf{s},t},\mathbf{z}} $ and $\lambda^1_{g_{\mathbf{s},t},\mathbf{z}} $ exist.
\end{enumerate}
\end{lemma}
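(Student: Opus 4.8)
The plan is to prove both items of Lemma \ref{NNDd1} by exhibiting a single Newton polyhedron that controls $g_{\mathbf{s},t}$ uniformly in $(\mathbf{s},t)$, and then invoking Kouchnirenko-type results together with Massey's machinery for L\^e numbers. First I would observe that, since each monomial $\mathbf{z}^{\alpha_i}$ is a vertex of $\Gamma(f_t)$ for $t\neq 0$, adding the terms $s_i\mathbf{z}^{\alpha_i}$ does not enlarge the Newton polyhedron: for every small $(\mathbf{s},t)$ with $t\neq 0$ we have $\Gamma_+(g_{\mathbf{s},t}) = \Gamma_+(f_t)$ (and at worst a face is filled in or a coefficient perturbed but never removed, because the $z_1$-free vertices $\alpha_i$ are exactly the ones whose coefficients we are shifting). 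The key point is that admissibility—specifically quasi-convenience (ii) together with the coordinate-axis condition and condition (iii)—guarantees $\Gamma(g_{\mathbf{s},t})$ meets every axis $z_j$-axis for $j\neq 1$, so $g_{\mathbf{s},t}$ restricted to $V(z_1)$ is convenient in the variables $\tilde{\mathbf{z}}$, while the monomials involving $z_1$ all lie ``above'' the $a_j$'s.

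Next I would establish item (1): $\dim_{\mathbf{0}}\Sigma g_{\mathbf{s},t}\leq 1$. The natural route is to show that the restriction $g_{\mathbf{s},t}|_{V(z_1-a_1)}$ has an isolated singularity at $\mathbf{0}\in\mathbb{C}^{n-1}$ for all small $a_1\neq 0$ and all small $(\mathbf{s},t)$, which by semicontinuity of the Milnor number would follow once we know $g_{\mathbf{s},0}|_{V(z_1-a_1)}$ has an isolated singularity, hence (by upper semicontinuity under the deformation in $\mathbf{s}$ and $t$) $g_{\mathbf{s},t}|_{V(z_1-a_1)}$ does too. Since $f$ defines a family of line singularities, $f_t|_{V(z_1-a_1)}$ has an isolated singularity with finite Milnor number; the perturbation by $\sum s_i\mathbf{z}^{\alpha_i}$ is, for fixed $a_1\neq 0$, a deformation of a function with isolated singularity and thus still has isolated singularity for small $\mathbf{s}$. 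Combined with the fact that $\mathbf{0}$ is always a singular point (all monomials have order $\geq 2$—one must check this from admissibility, or at least that the linear part is absent), this forces $\Sigma g_{\mathbf{s},t}$ near $\mathbf{0}$ to be at most $1$-dimensional. One has to be slightly careful that uniformity in $(\mathbf{s},t)$ is genuine; the cleanest argument is via the Newton-polyhedron description: Newton non-degeneracy of $f_t$ for all small $t$, plus the structural conditions (i)--(iii), let one check Newton non-degeneracy of $g_{\mathbf{s},t}$ directly for all small $(\mathbf{s},t)$ (the faces that change are precisely those containing some $\alpha_i$, and on such a face the face-function of $g_{\mathbf{s},t}$ is obtained from that of $f_t$ by adding constants $s_i$ to monomials that are vertices, which does not create common zeros of the partials in the torus when $\mathbf s$ is small relative to the fixed non-degeneracy of $f_t$). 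Granting Newton non-degeneracy and that $\Gamma_+(g_{\mathbf{s},t})$ touches all axes except possibly $z_1$, Kouchnirenko's criterion (in the relative/line-singularity form, e.g. as used by Massey and in \cite{EOR}) yields $\dim_{\mathbf{0}}\Sigma g_{\mathbf{s},t}\leq 1$.

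Then item (2): for line singularities with $1$-dimensional singular locus, the only candidate nonzero L\^e numbers are $\lambda^0$ and $\lambda^1$, and their existence is guaranteed as soon as the polar conditions (prepolarity of a generic hyperplane slice, good stratification) hold—exactly as recalled after the definition of families of line singularities, via \cite[Remark 1.29]{M} and \cite[Proposition 1.23]{M}. So I would argue that once $\dim_{\mathbf{0}}\Sigma g_{\mathbf{s},t}\leq 1$ and $g_{\mathbf{s},t}$ is Newton non-degenerate and quasi-convenient in $\tilde{\mathbf{z}}$, the hyperplane $V(z_1)$ is again a prepolar slice (its restriction has isolated singularity, by the same Newton argument since the face-functions on $V(z_1)$ of $g_{\mathbf{s},t}$ coincide with those of $f_t$ up to the added constants $s_i$, which do not affect the $z_1$-free part's non-degeneracy), giving a good stratification and hence the existence of $\lambda^0_{g_{\mathbf{s},t},\mathbf{z}}$ and $\lambda^1_{g_{\mathbf{s},t},\mathbf{z}}$. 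The main obstacle I anticipate is the uniformity in Step 1: making precise that a single neighbourhood $W$ of $(\mathbf{0},0)$ works simultaneously for all $(\mathbf{s},t)$—in particular handling $t=0$, where $f_0$ need not be quasi-convenient and its Newton boundary may differ from that of $f_t$, $t\neq 0$—so that one cannot simply quote ``$\Gamma_+(g_{\mathbf{s},t})=\Gamma_+(f_t)$'' at $t=0$. The fix is to note that $g_{\mathbf{s},0}=f_0+\sum s_i\mathbf{z}^{\alpha_i}$ does acquire the vertices $\alpha_i$ (with coefficients $s_i$) for $\mathbf{s}\neq\mathbf{0}$, and to run the semicontinuity/Newton argument with respect to the polyhedron of $g_{\mathbf{s},0}$ for $\mathbf{s}\neq\mathbf{0}$, then pass to the limit $\mathbf{s}\to\mathbf{0}$ using upper semicontinuity of $\dim_{\mathbf{0}}\Sigma$ and of the relevant Milnor numbers, which are the properties that are genuinely constructible/semicontinuous in families.
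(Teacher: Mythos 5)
Your overall strategy---deform the isolated-singularity data attached to $f_0$, use openness of isolatedness under deformation, and then feed the result into Massey's existence criteria for the L\^e numbers---matches the paper's, and your treatment of item (2) via \cite[Proposition 1.23 and Remark 1.29]{M} is exactly what the paper does. The gap is in item (1). You propose to deduce $\dim_{\mathbf{0}}\Sigma g_{\mathbf{s},t}\le 1$ from the fact that $g_{\mathbf{s},t}\vert_{V(z_1-a_1)}$ has an isolated critical point at the origin of the slice for all small $a_1\ne 0$. As stated this implication fails: a $2$-dimensional component $C$ of $\Sigma g_{\mathbf{s},t}$ through $\mathbf{0}$ meets each hyperplane $V(z_1-a_1)$ in a curve, but that curve need not pass through the point $(a_1,\mathbf{0})$ at which you test isolatedness; it merely accumulates on it as $a_1\to 0$, so you would need a critical-point-free punctured ball of radius \emph{uniform} in $a_1$, which you do not establish. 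The clean fix is to use the slice \emph{through the origin}: since $\Sigma(g_{\mathbf{s},t}\vert_{V(z_1)})\supseteq \Sigma g_{\mathbf{s},t}\cap V(z_1)$ and a hyperplane section drops local dimension by at most one, isolatedness of the critical point of $g_{\mathbf{s},t}\vert_{V(z_1)}$ at $\mathbf{0}$ (which does follow by your deformation argument) already forces $\dim_{\mathbf{0}}\Sigma g_{\mathbf{s},t}\le 1$. The paper reaches the same conclusion by a different mechanism, a contradiction via the Iomdine--L\^e--Massey formula: $f_0+z_1^{a}$ has an isolated singularity for $a\ge 2+\lambda^0_{f_0,\mathbf{z}}$, this persists for $g_{\mathbf{s},t}+z_1^{a}$ with $(\mathbf{s},t)$ small, and if $\dim_{\mathbf{0}}\Sigma g_{\mathbf{s},t}>1$ the formula would give $\dim_{\mathbf{0}}\Sigma(g_{\mathbf{s},t}+z_1^{a})=\dim_{\mathbf{0}}\Sigma g_{\mathbf{s},t}-1>0$, a contradiction; this choice also sets up the $z_1^{a}$-machinery reused in the subsequent lemmas.

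The branch you call the ``cleanest argument'' is the part I would reject outright. Newton non-degeneracy is not inherited by $g_{\mathbf{s},t}$ from $f_t$ merely because $\mathbf{s}$ is small: for $t=0$ and $\mathbf{s}\ne\mathbf{0}$ the polyhedron acquires new vertices $\alpha_i$ (absent from $\Gamma(f_0)$) and hence new compact faces whose face functions mix the small terms $s_i\mathbf{z}^{\alpha_i}$ with monomials of $f_0$; their non-degeneracy holds only for \emph{generic}, not all small, $\mathbf{s}$, and degeneracy is a Zariski-closed condition only for a \emph{fixed} Newton boundary, which is precisely what is not fixed here. Likewise the identity $\Gamma_+(g_{\mathbf{s},t})=\Gamma_+(f_t)$ for $t\ne 0$ fails on the thin set where $s_i$ cancels the coefficient of $\mathbf{z}^{\alpha_i}$ in $f_t$ (coefficients which may themselves tend to $0$ with $t$). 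Note that the paper never needs non-degeneracy of $g_{\mathbf{s},t}$ itself; in the following lemma it only uses the one-sided Kouchnirenko inequality $\mu\ge\nu$ from \cite{K} for $g_{\mathbf{s},t}+z_1^{a}$. I would delete that branch of your argument and keep only the (repaired) deformation argument.
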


\begin{proof}
Essentially, this follows from the Iomdine--L\^e--Massey formula \cite[Theorem~4.5]{M} and \cite[Proposition 3.1]{MS}.
Indeed, since $g_{\mathbf{0},0}\equiv f_0$ has a line singularity at~$\mathbf{0}\in\mathbb{C}^n$, the restriction map $g_{\mathbf{0},0}\vert_{V(z_1)}\equiv f_0\vert_{V(z_1)}$ has an isolated singularity at $\mathbf{0}\in\mathbb{C}^{n-1}$ (by definition). Besides, by the Iomdine--L\^e--Massey formula, for any $a\geq 2+\lambda^0_{f_0,\mathbf{z}}$ the function 
\begin{equation*} 
g_{\mathbf{0},0}+z_1^{a}\equiv f_0+z_1^{a}
\end{equation*} 
 also has an isolated singularity at~$\mathbf{0}\in\mathbb{C}^n$. Then, since the families
\begin{equation*}
\{g_{\mathbf{s},t}\vert_{V(z_1)}\}_{\mathbf{s},t}
\quad\mbox{and}\quad
\{g_{\mathbf{s},t}+z_1^{a}\}_{\mathbf{s},t}
\end{equation*} 
are deformations of $g_{\mathbf{0},0}\vert_{V(z_1)}$ and $g_{\mathbf{0},0}+z_1^{a}$, respectively, there exists a neighbourhood $W$ of $(\mathbf{0},0)$ in $\mathbb{C}_\mathbf{s}^m\times\mathbb{C}_t$ such that for any $(\mathbf{s},t)\in W$, the functions
\begin{equation*}
g_{\mathbf{s},t}\vert_{V(z_1)}
\quad\mbox{and}\quad
g_{\mathbf{s},t}+z_1^{a}
\end{equation*} 
both have an isolated singularity at the origin too (see, e.g., \cite[Chap.~I, Theorem~2.6]{GLS}).
In particular, the condition $\dim_{\mathbf{0}}g_{\mathbf{s},t}\vert_{V(z_1)}=0$ implies that $\dim_{\mathbf{0}}\Gamma^1_{g_{\mathbf{s},t},\mathbf{z}}\leq 1$, and hence, by \cite[Proposition 1.23]{M}, the L\^e number $\lambda^0_{g_{\mathbf{s},t},\mathbf{z}}$ is defined for any $(\mathbf{s},t)\in W$.
Here, $\Gamma^1_{g_{\mathbf{s},t},\mathbf{z}}$ denotes the $1$st (relative) polar variety of $g_{\mathbf{s},t}$ with respect to $\mathbf{z}$ (see \cite[Definition 1.3]{M}).

Now, we show the assertion on the dimension. We argue by contradiction. Suppose that $W$ contains a point $(\mathbf{s}_0,t_0)$ such that $\dim_{\mathbf{0}}\Sigma g_{\mathbf{s}_0,t_0}> 1$. Then, by the (proof of the) Iomdine--L\^e--Massey formula, for any $a\geq 2+\lambda^0_{g_{\mathbf{s}_0,t_0},\mathbf{z}}$ we have
\begin{equation*}
\dim_{\mathbf{0}} \Sigma (g_{\mathbf{s}_0,t_0}+z_1^{a})=
\dim_{\mathbf{0}} \Sigma g_{\mathbf{s}_0,t_0}-1>0.
\end{equation*} 
In particular, combined with the previous observations, this implies that for any $a\geq \sup\{2+\lambda^0_{f_0,\mathbf{z}},2+\lambda^0_{g_{\mathbf{s}_0,t_0},\mathbf{z}}\}$, the function $g_{\mathbf{s}_0,t_0}+z_1^{a}$ has an isolated singularity at $\mathbf{0}$ on one hand, and $\dim_{\mathbf{0}} \Sigma (g_{\mathbf{s}_0,t_0}+z_1^{a})>0$ on the other hand---a contradiction. 

Finally, we observe that for all $(\mathbf{s},t)\in W$, the L\^e number $\lambda^1_{g_{\mathbf{s},t},\mathbf{z}} $ exists. This follows immediately from \cite[Remark 1.29]{M}, knowing that
$\dim_{\mathbf{0}}\Sigma g_{\mathbf{s},t}\leq 1$ and $g_{\mathbf{s},t}\vert_{V(z_1)}$ has an isolated critical point at the origin.
\end{proof}

\begin{remark}
In fact, we can easily check that for any $(\mathbf{s},t)\in W$, the $z_1$-axis of $\mathbb{C}_{\mathbf{z}}^n$ is contained in $\Sigma g_{\mathbf{s},t}$, and hence, by Lemma \ref{NNDd1}, we have $\dim_{\mathbf{0}}\Sigma g_{\mathbf{s},t} = 1$.
\end{remark}

The proof of Lemma \ref{NNDd1} shows that there is a neighbourhood $W$ of $(\mathbf{0},0)$ in $\mathbb{C}_\mathbf{s}^m\times\mathbb{C}_t$ such that for any $(\mathbf{s},t)\in W$, the function $g_{\mathbf{s},t} + z_1^a$ has an isolated singularity at~$\mathbf{0}$ if $a\geq 2+\lambda^0_{f_0,\mathbf{z}}$. The next lemma establishes the independence of the Milnor number of $g_{\mathbf{s},t} + z_1^a$ for large $a$'s. The proof relies on the \emph{Newton non-degeneracy} of the $f_t$'s. This lemma will then be used to prove the constancy of the L\^e numbers $\lambda^0_{g_{\mathbf{s},t},\mathbf{z}}$ and $\lambda^1_{g_{\mathbf{s},t},\mathbf{z}}$.

\begin{lemma}\label{inter}
For any $a\geq 2+\lambda^0_{f_0,\mathbf{z}}$ sufficiently large, the Milnor number $\mu(g_{\mathbf{s},t}+z_1^a)$ of $g_{\mathbf{s},t}+z_1^a$ at $\mathbf{0}$ is independent of $(\mathbf{s},t)$ for all $(\mathbf{s},t)\in W$, where $W$ is as in Lemma \ref{NNDd1}.
\end{lemma}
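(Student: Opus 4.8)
The plan is to compute $\mu(g_{\mathbf{s},t}+z_1^a)$ via Kouchnirenko's theorem on a dense open subset of $W$ and then to propagate the value by upper semicontinuity of the Milnor number. Set $\mu_0:=\mu(f_0+z_1^a)$. Since $f=\{f_t\}$ is a family of line singularities, $V(z_1)$ is a prepolar slice of each $f_t$ at $\mathbf{0}$, so by the Iomdine--L\^e--Massey formula \cite[Theorem~4.5]{M} one has $\mu(f_t+z_1^a)=\lambda^0_{f_t,\mathbf{z}}+(a-1)\lambda^1_{f_t,\mathbf{z}}$ for $a\gg 0$; as the L\^e numbers of the $f_t$'s are independent of $t$, this gives $\mu(f_t+z_1^a)=\mu_0$ for all small $t$. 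By Lemma~\ref{NNDd1} every germ $g_{\mathbf{s},t}+z_1^a$, $(\mathbf{s},t)\in W$, has an isolated singularity at $\mathbf{0}$, so after possibly shrinking $W$ around $(\mathbf{0},0)$, upper semicontinuity of the Milnor number yields $\mu(g_{\mathbf{s},t}+z_1^a)\le\mu(g_{\mathbf{0},0}+z_1^a)=\mu_0$ on $W$. Being upper semicontinuous and bounded above by $\mu_0$, the function $(\mathbf{s},t)\mapsto\mu(g_{\mathbf{s},t}+z_1^a)$ takes the value $\mu_0$ along a closed subset of $W$; hence it suffices to show that it equals $\mu_0$ on a dense subset.

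Let $W^{\ast}\subseteq W$ be the set of those $(\mathbf{s},t)$ for which $g_{\mathbf{s},t}+z_1^a$ is convenient, Newton non-degenerate, and has Newton polyhedron equal to the polyhedron $P$ introduced below. The polynomial $g_{\mathbf{s},t}+z_1^a=f_t+z_1^a+\sum_i s_i\mathbf{z}^{\alpha_i}$ contains $z_1^a$ and, by quasi-convenience of $f_t$ (for $t\ne 0$) together with the remark following Definition~\ref{def-adm}, a pure power of each $z_j$ ($j\ge 2$), hence is convenient for generic $(\mathbf{s},t)$. Moreover, condition (iii) forces every monomial of $f_t$ divisible by $z_1$ to lie in $\alpha_i+\mathbb{R}^n_{\ge 0}$ for every $i$, hence in the interior of $\Gamma_+(f_t)$; therefore $\Gamma_+(f_t)$ is the $z_1$-prism over $\Gamma_+(f_t|_{V(z_1)})$, and, for $a\gg 0$, $\Gamma_+(g_{\mathbf{s},t}+z_1^a)$ equals a fixed polyhedron $P$ depending only on $\alpha_1,\dots,\alpha_m$ and $a$, and not on $(\mathbf{s},t)\in W^{\ast}$. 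Granting that $W^{\ast}$ is dense in $W$ (which follows from the non-degeneracy claim discussed below), Kouchnirenko's theorem \cite{K} gives $\mu(g_{\mathbf{s},t}+z_1^a)=\nu(P)$ for every $(\mathbf{s},t)\in W^{\ast}$, where $\nu(P)$ is the Newton number of $P$; evaluating at a point $(\mathbf{0},t_0)$ with $t_0\ne 0$ generic (which lies in $W^{\ast}$ once the non-degeneracy claim is granted) then shows $\nu(P)=\mu(f_{t_0}+z_1^a)=\mu_0$. Thus $\mu(g_{\mathbf{s},t}+z_1^a)=\mu_0$ on the dense set $W^{\ast}$, which by the first paragraph completes the proof.

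The step I expect to be the main obstacle is the Newton non-degeneracy of $g_{\mathbf{s},t}+z_1^a$ for $a$ large, which is exactly where conditions (i) and (iii) enter. Since $f_t$ is Newton non-degenerate, so is its restriction $f_t|_{V(z_1)}$ to the coordinate hyperplane $\{z_1=0\}$; and, by the location of the $z_1$-divisible monomials described above, for $a\gg 0$ the compact faces of $\Gamma_+(f_t+z_1^a)$ are of two types: faces contained in $\{x_1=0\}$, whose face functions are face functions of $f_t|_{V(z_1)}$ and hence non-degenerate, and ``cap'' faces joining the vertex $(a,0,\dots,0)$ to a compact face of $\Gamma_+(f_t|_{V(z_1)})$, whose face functions have the Thom--Sebastiani form $h(\tilde{\mathbf{z}})+c\,z_1^a$ with $h$ a non-degenerate face function of $f_t|_{V(z_1)}$ and $c\ne 0$; in either case non-degeneracy is inherited. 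Finally, non-degeneracy relative to the fixed polyhedron $P$ is a Zariski-open condition on the coefficients and holds for $f_t+z_1^a$ (that is, for $\mathbf{s}=\mathbf{0}$ and $t$ generic), so it persists after a generic perturbation $\sum_i s_i\mathbf{z}^{\alpha_i}$; hence the degenerate locus is a proper analytic subset of $W$, the set $W^{\ast}$ is indeed dense, and, combined with the closedness noted in the first paragraph, this gives $\mu(g_{\mathbf{s},t}+z_1^a)\equiv\mu_0$ on $W$.
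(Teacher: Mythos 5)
Your strategy is sound and shares the paper's main ingredients (the uniform Iomdine--L\^e--Massey formula giving $\mu(f_t+z_1^a)=\mu_0$, upper semicontinuity of $\mu$ on $W$, and Kouchnirenko's theorem), but you route the argument through the step you yourself identify as the main obstacle---generic Newton non-degeneracy of the perturbed germs $g_{\mathbf{s},t}+z_1^a$---and your justification of that step is not airtight. You argue that ``non-degeneracy relative to the fixed polyhedron $P$ is a Zariski-open condition on the coefficients and holds at $(\mathbf{0},t_0)$, hence the degenerate locus is a proper analytic subset of $W$.'' But Kouchnirenko's genericity statement \cite[Th\'eor\`eme 6.1]{K} only says that the non-degenerate germs with a given polyhedron \emph{contain} a dense Zariski-open subset of coefficient space; the degenerate locus is a priori only a constructible set, and its Zariski closure could still contain the image of the whole analytic family $(\mathbf{s},t)\mapsto g_{\mathbf{s},t}+z_1^a$ even though it misses the single point $(\mathbf{0},t_0)$. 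So the density of your $W^{\ast}$ needs a genuine argument, not just the openness slogan; as written this is the one real soft spot in the proposal.

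The paper's proof shows that this detour is unnecessary: for the perturbed germs it invokes only the \emph{inequality} half of Kouchnirenko's theorem, $\mu(g_{\mathbf{s},t}+z_1^a)\geq\nu(g_{\mathbf{s},t}+z_1^a)$, which holds for every convenient germ regardless of degeneracy, and reserves the equality $\mu=\nu$ for $f_t+z_1^a$ alone, whose non-degeneracy for $a$ large is guaranteed by \cite[Lemma~3.7]{BO} (your Thom--Sebastiani-type verification of this point is a correct self-contained substitute). Since $\nu(g_{\mathbf{s},t}+z_1^a)=\nu(f_t+z_1^a)$ whenever the Newton boundaries agree, the chain $\mu_0\geq\mu(g_{\mathbf{s},t}+z_1^a)\geq\nu(f_t+z_1^a)=\mu(f_t+z_1^a)=\mu_0$ closes pointwise for $t\neq0$. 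If you replace your ``$\mu=\nu(P)$ on $W^{\ast}$'' step by this one-sided inequality on the dense set where $g_{\mathbf{s},t}+z_1^a$ is convenient with Newton polyhedron $P$ (a set whose density is elementary, being the complement of the zero locus of finitely many analytic functions such as $c_{\alpha_i}(t)+s_i$), then your closedness-plus-density argument from the first paragraph finishes the proof and no non-degeneracy of the perturbations is needed at all. Your auxiliary observation that condition (iii) of Definition~\ref{def-adm} forces $\Gamma_+(f_t)$ to be the prism over its slice by $\{x_1=0\}$, so that $P$ depends only on $\alpha_1,\dots,\alpha_m$ and $a$, is correct and is implicitly what the paper uses when it asserts that the Newton boundaries of $g_{\mathbf{s},t}+z_1^a$ and $f_t+z_1^a$ coincide.
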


\begin{proof}
Clearly, for all $(\mathbf{s},t)\in W$ such that $t\neq 0$, the Newton boundaries $\Gamma(g_{\mathbf{s},t}+z_1^a)$ and $\Gamma(f_t+z_1^a)$ of $g_{\mathbf{s},t}+z_1^a$ and $f_t+z_1^a$ coincide, and thus so do their \emph{Newton numbers}\footnote{For the definition, see \cite{K}.}:
\begin{equation}\label{ref3}
\nu(g_{\mathbf{s},t}+z_1^a)=\nu(f_t+z_1^a).
\end{equation} 
By the upper-semicontinuity of the Milnor number $\mu(g_{\mathbf{s},t}+z_1^a)$ with respect to the variables $(\mathbf{s},t)$, we can assume, by shrinking $W$ if needed, that for all $(\mathbf{s},t)\in W$, we have
\begin{equation}\label{ref1}
\mu(f_0+z_1^a)\equiv \mu(g_{\mathbf{0},0}+z_1^a)\geq \mu(g_{\mathbf{s},t}+z_1^a).
\end{equation} 
Moreover, by a theorem of Kouchnirenko \cite[\S 1.10, Th\'eor\`eme I]{K}, we also have 
\begin{equation}\label{ref2}
\mu(g_{\mathbf{s},t}+z_1^a)\geq\nu(g_{\mathbf{s},t}+z_1^a).
\end{equation} 

Now, given that $a\geq 2+\lambda^0_{f_0,\mathbf{z}}=2+\lambda^0_{f_t,\mathbf{z}}$, and
possibly after further reducing $W$, it follows from the uniform Iomdine--L\^e--Massey formula \cite[Theorem 4.15]{M} that
\begin{equation*}
\mu(f_t+z_1^a)=\lambda^0_{f_t,\mathbf{z}} +(a-1)\lambda^1_{f_t,\mathbf{z}}, 
\end{equation*} 
and therefore,
\begin{equation}\label{refa1}
\mu(f_t+z_1^a)=\mu(f_0+z_1^a)
\end{equation} 
for all $t$ such that $(\mathbf{s},t)\in W$.
In other words, the family of isolated singularities $\{f_t+z_1^a\}$ is $\mu$-constant.
Moreover, since $f_t$ is \emph{Newton non-degenerate}, we could have chosen the integer $a$ sufficiently large from the beginning, so that, in addition, the function $f_t+z_1^a$ is Newton non-degenerate too for all $t$ with $(\mathbf{s},t)\in W$ (see \cite[Lemma~3.7]{BO})\footnote{Note that in \cite[Lemma~3.7]{BO}, the lower bound for $a$ depends only on the Newton boundary $\Gamma(f_t)$. In our case, we have two such bounds: one for $\Gamma(f_0)$ and one for $\Gamma(f_t)$, $t\not=0$. We take $a$ greater than the largest one.}. Thus, by the second part of Kouchnirenko's theorem \cite[\S 1.10, Th\'eor\`eme~I]{K}, for all such $t$ we have
\begin{equation}\label{ref4}
\mu(f_t+z_1^a)=\nu(f_t+z_1^a).
\end{equation} 

All of this shows that for any $(\mathbf{s},t)\in W$:
\begin{equation*}
\mu(f_0+z_1^a)\stackrel{\eqref{ref1}}{\geq} \mu(g_{\mathbf{s},t}+z_1^a)\stackrel{\eqref{ref2}}{\geq} \nu(g_{\mathbf{s},t}+z_1^a)\stackrel{\eqref{ref3}}{=}\nu(f_t+z_1^a)\stackrel{\eqref{ref4}}{=}\mu(f_t+z_1^a)\stackrel{\eqref{refa1}}{=}\mu(f_0+z_1^a).
\end{equation*} 
This implies that the family $\{g_{\mathbf{s},t}+z_1^a\}_{(\mathbf{s},t)\in W}$ is $\mu$-constant.
\end{proof}

Now, we are able to prove the constancy of the L\^e numbers $\lambda^0_{g_{\mathbf{s},t},\mathbf{z}}$~and~$\lambda^1_{g_{\mathbf{s},t},\mathbf{z}}$.

\begin{lemma}\label{lemmaclng}
There exists a neighbourhood $U\subseteq W$ of $(\mathbf{0},0)$ in $\mathbb{C}_{\mathbf{s}}^m\times\mathbb{C}_t$  such that for any $(\mathbf{s},t)\in U$, the L\^e numbers $\lambda^0_{g_{\mathbf{s},t},\mathbf{z}} $ and $\lambda^1_{g_{\mathbf{s},t},\mathbf{z}}$ are independent of $(\mathbf{s},t)$. (Here again, $W$ is as in Lemma \ref{NNDd1}.)
\end{lemma}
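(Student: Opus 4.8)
The plan is to deduce the constancy of $\lambda^0_{g_{\mathbf{s},t},\mathbf{z}}$ and $\lambda^1_{g_{\mathbf{s},t},\mathbf{z}}$ from Lemma~\ref{inter} by running the uniform Iomdine--L\^e--Massey formula \cite[Theorem~4.15]{M} in reverse. First I would recall that, by Lemma~\ref{NNDd1}, for every $(\mathbf{s},t)\in W$ the family $g_{\mathbf{s},t}$ has $\dim_{\mathbf{0}}\Sigma g_{\mathbf{s},t}\leq 1$ with $g_{\mathbf{s},t}|_{V(z_1)}$ having an isolated singularity, so both L\^e numbers are defined and, since $a\geq 2+\lambda^0_{f_0,\mathbf{z}}$ was chosen large, $g_{\mathbf{s},t}+z_1^a$ has an isolated singularity at $\mathbf{0}$. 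I would then choose two exponents, say $a$ and $a+1$, both large enough that Lemma~\ref{inter} applies (this may force a further shrinking of $W$ to a neighbourhood $U$), so that both $\mu(g_{\mathbf{s},t}+z_1^a)$ and $\mu(g_{\mathbf{s},t}+z_1^{a+1})$ are independent of $(\mathbf{s},t)\in U$.

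The key step is the identity from the uniform Iomdine--L\^e--Massey formula: after possibly shrinking $U$ again, for every $(\mathbf{s},t)\in U$ one has
\begin{equation*}
\mu(g_{\mathbf{s},t}+z_1^a)=\lambda^0_{g_{\mathbf{s},t},\mathbf{z}}+(a-1)\,\lambda^1_{g_{\mathbf{s},t},\mathbf{z}},
\end{equation*}
and the analogous formula with $a$ replaced by $a+1$. Subtracting the two gives
\begin{equation*}
\lambda^1_{g_{\mathbf{s},t},\mathbf{z}}=\mu(g_{\mathbf{s},t}+z_1^{a+1})-\mu(g_{\mathbf{s},t}+z_1^a),
\end{equation*}
which is independent of $(\mathbf{s},t)$ by Lemma~\ref{inter}; plugging this back into the first identity shows that $\lambda^0_{g_{\mathbf{s},t},\mathbf{z}}=\mu(g_{\mathbf{s},t}+z_1^a)-(a-1)\lambda^1_{g_{\mathbf{s},t},\mathbf{z}}$ is likewise independent of $(\mathbf{s},t)$. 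This completes the argument.

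The one genuine subtlety — and the place I would be most careful — is making sure the uniform Iomdine--L\^e--Massey formula \cite[Theorem~4.15]{M} really applies to the \emph{multi-parameter} family $\{g_{\mathbf{s},t}\}$ and not merely to a one-parameter family: one must check that the hypotheses of that theorem (the appropriate properness/good-stratification and the bound $a\geq 2+\lambda^0$ holding uniformly) are met on a common neighbourhood $U$ for both exponents $a$ and $a+1$ simultaneously. Since $\lambda^0_{g_{\mathbf{0},0},\mathbf{z}}=\lambda^0_{f_0,\mathbf{z}}$ and the relevant L\^e numbers are upper semicontinuous, such a uniform bound and neighbourhood exist after finitely many shrinkings; I would simply record $U\subseteq W$ as the intersection of the neighbourhoods produced at each stage. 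Everything else is a formal manipulation of the two linear relations above, so no further estimates are needed.
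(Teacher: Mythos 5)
Your algebra is fine and the two-exponent trick ($a$ and $a+1$, subtract to isolate $\lambda^1$) is a legitimate and arguably more economical variant of what the paper does: the paper instead first proves the constancy of $\lambda^1_{g_{\mathbf{s},t},\mathbf{z}}$ by invoking the \emph{lexicographic} upper-semicontinuity of the pair $(\lambda^1,\lambda^0)$ \cite[Corollary~4.16]{M} together with the hypothesis that $\lambda^1_{f_t,\mathbf{z}}=\lambda^1_{g_{\mathbf{0},t},\mathbf{z}}$ is constant (which rules out the branch where $\lambda^1$ drops), and only then runs the Iomdine--L\^e--Massey identity with a single exponent to pin down $\lambda^0$.

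However, there is a genuine gap at exactly the point you flagged as the subtlety, and your proposed patch for it does not work as stated. To write $\mu(g_{\mathbf{s},t}+z_1^a)=\lambda^0_{g_{\mathbf{s},t},\mathbf{z}}+(a-1)\lambda^1_{g_{\mathbf{s},t},\mathbf{z}}$ at an arbitrary point $(\mathbf{s},t)\in U$ you need the bound $a\geq 2+\lambda^0_{g_{\mathbf{s},t},\mathbf{z}}$ at \emph{that} point, not merely at $(\mathbf{0},0)$. You justify the uniform bound by saying that ``the relevant L\^e numbers are upper semicontinuous,'' but $\lambda^0$ by itself is \emph{not} upper-semicontinuous: only the pair $(\lambda^1,\lambda^0)$ is, and only lexicographically. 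A priori $\lambda^1_{g_{\mathbf{s},t},\mathbf{z}}$ could drop while $\lambda^0_{g_{\mathbf{s},t},\mathbf{z}}$ jumps above $\lambda^0_{f_0,\mathbf{z}}$, in which case no fixed $a$ chosen in advance is guaranteed to satisfy the hypothesis of \cite[Theorem~4.5]{M} at every point of $U$. Closing this requires precisely the paper's opening move: by \cite[Corollary~4.16]{M}, either $\lambda^1$ drops on $U$ --- impossible because $\lambda^1_{g_{\mathbf{0},t},\mathbf{z}}=\lambda^1_{f_t,\mathbf{z}}$ is constant by the standing hypothesis --- or else $\lambda^1_{g_{\mathbf{s},t},\mathbf{z}}=\lambda^1_{f_0,\mathbf{z}}$ and $\lambda^0_{g_{\mathbf{s},t},\mathbf{z}}\leq\lambda^0_{f_0,\mathbf{z}}$ throughout $U$, which is the uniform bound you need. (Alternatively you would have to invoke a genuinely uniform, multi-parameter version of the L\^e--Iomdine formula whose threshold depends only on the special fibre $g_{\mathbf{0},0}=f_0$; the paper deliberately avoids relying on this and uses the pointwise Theorem~4.5 for the family $g_{\mathbf{s},t}$, reserving the uniform Theorem~4.15 for the one-parameter family $\{f_t\}$ inside Lemma~\ref{inter}.) Note that once this semicontinuity step is in place, the constancy of $\lambda^1$ already falls out of it, so your two-exponent subtraction becomes redundant --- though it is still a correct way to finish.
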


\begin{proof}
By \cite[Corollary 4.16]{M}, the pair of L\^e numbers 
$(\lambda^1_{g_{\mathbf{s},t},\mathbf{z}} ,\lambda^0_{g_{\mathbf{s},t},\mathbf{z}} )$ is lexicographically upper-semicontinuous in the variables $(\mathbf{s},t)$, that is, there exists a neighbourhood $U\subseteq W$ of $(\mathbf{0},0)$ in $\mathbb{C}_{\mathbf{s}}^m\times\mathbb{C}_t$ such that:
\begin{enumerate}
\item[$\bullet$] 
either $\lambda^1_{g_{\mathbf{0},0},\mathbf{z}} >\lambda^1_{g_{\mathbf{s},t},\mathbf{z}} $
for all $(\mathbf{s},t)\in U$;
\item[$\bullet$] 
or $\lambda^1_{g_{\mathbf{0},0},\mathbf{z}} =\lambda^1_{g_{\mathbf{s},t},\mathbf{z}} $ and $\lambda^0_{g_{\mathbf{0},0},\mathbf{z}} \geq \lambda^0_{g_{\mathbf{s},t},\mathbf{z}} $ for all $(\mathbf{s},t)\in U$.
\end{enumerate}
If $\lambda^1_{g_{\mathbf{0},0},\mathbf{z}} >\lambda^1_{g_{\mathbf{s},t},\mathbf{z}} $
for all $(\mathbf{s},t)\in U$, then, in particular, $\lambda^1_{f_0,\mathbf{z}} \equiv \lambda^1_{g_{\mathbf{0},0},\mathbf{z}} >\lambda^1_{g_{\mathbf{0},t},\mathbf{z}} \equiv\lambda^1_{f_t,\mathbf{z}} $, which is a contradiction. Thus, necessarily, 
\begin{equation}\label{ref5}
\lambda^1_{g_{\mathbf{0},0},\mathbf{z}} =\lambda^1_{g_{\mathbf{s},t},\mathbf{z}} 
\quad\mbox{and}\quad
\lambda^0_{g_{\mathbf{0},0},\mathbf{z}} \geq \lambda^0_{g_{\mathbf{s},t},\mathbf{z}} 
\end{equation} 
 for all $(\mathbf{s},t)\in U$. 
We claim that $\lambda^0_{g_{\mathbf{0},0},\mathbf{z}} =\lambda^0_{g_{\mathbf{s},t},\mathbf{z}} $ for all $(\mathbf{s},t)\in U$. Indeed, pick any  $(\mathbf{s},t)\in U$. For $a>0$ sufficiently large (so that, in particular, we have $a\geq 2+\lambda^0_{f_0,\mathbf{z}} \equiv 2+\lambda^0_{g_{\mathbf{0},0},\mathbf{z}} \geq 2+\lambda^0_{g_{\mathbf{s},t},\mathbf{z}}$),  the Iomdine--L\^e--Massey formula \cite[Theorem 4.5]{M} shows that
\begin{align*}
& \mu(g_{\mathbf{0},0}+z_1^a)=\lambda^0_{g_{\mathbf{0},0},\mathbf{z}} +(a-1)\lambda^1_{g_{\mathbf{0},0},\mathbf{z}} ,\\
& \mu(g_{\mathbf{s},t}+z_1^a)=\lambda^0_{g_{\mathbf{s},t},\mathbf{z}} +(a-1)\lambda^1_{g_{\mathbf{s},t},\mathbf{z}} .
\end{align*} 
Since the family $\{g_{\mathbf{s},t}+z_1^a\}_{(\mathbf{s},t)\in U}$ is $\mu$-constant---after possibly increasing $a$, as ensured by Lemma \ref{inter}---and given that $\lambda^1_{g_{\mathbf{0},0},\mathbf{z}}=\lambda^1_{g_{\mathbf{s},t},\mathbf{z}}$ (by the first part of \eqref{ref5}), it follows that $\lambda^0_{g_{\mathbf{0},0},\mathbf{z}}=\lambda^0_{g_{\mathbf{s},t},\mathbf{z}}$.
\end{proof}

\subsection*{Step 2 -- Controlling $\vert \partial_{t,z_1} f(t,\mathbf{z})\vert$}

Combined with the general version of the L\^e--Saito theorem for multi-parameter families of non-isolated singularities due to Massey (see \cite[Theorem~6.9]{M}),  Lemma \ref{lemmaclng} above implies that $\mathbb{C}_{\mathbf{s}}^m\times \mathbb{C}_t\times\{\mathbf{0}\}$ satisfies \emph{Thom's $(a_g)$ condition} at the origin with respect to the ambient stratum. More precisely, this means that if $\mathbf{x}_k:=(\mathbf{s}_k,t_k,\mathbf{z}_k)$, $k\in\mathbb{N}$, is a sequence of points in $(\mathbb{C}_{\mathbf{s}}^m\times \mathbb{C}_t\times\mathbb{C}_{\mathbf{z}}^{n})\setminus \Sigma g$ such that 
\begin{equation*}
\mathbf{x}_k\to \mathbf{0}
\quad\mbox{and}\quad
T_{\mathbf{x}_k}V(g-g(\mathbf{x}_k))\to T,
\end{equation*}
then $\mathbb{C}_{\mathbf{s}}^m\times \mathbb{C}_t\times\{\mathbf{0}\}\subseteq T$.
In other words,
\begin{equation}\label{formula1}
\vert \partial_{t} f(t,\mathbf{z})\vert + \sum_{i=1}^m \vert \mathbf{z}^{\alpha_i}\vert  
\ll \bigg\vert\partial_{\mathbf{z}} f(t,\mathbf{z}) + \sum_{i=1}^m s_i\, \partial_{\mathbf{z}}(\mathbf{z}^{\alpha_i})\bigg\vert 
\leq \bigg\vert\partial f(t,\mathbf{z}) + \sum_{i=1}^m s_i\, \partial_{t,\mathbf{z}}(\mathbf{z}^{\alpha_i})\bigg\vert
\end{equation}
as $(\mathbf{s},t,\mathbf{z})\in (\mathbb{C}_{\mathbf{s}}^m\times \mathbb{C}_t\times\mathbb{C}_{\mathbf{z}}^{n})\setminus \Sigma g$ approaches the origin. 
Then, by the same argument to that given in the proof of \cite[Lemma 3.3]{A} (see also \cite{Parusinski}), we deduce from this relation that
\begin{equation}\label{r1}
\sum_{i=1}^m \vert \mathbf{z}^{\alpha_i}\vert \ll 
\inf_{\eta\in\mathbb{C}}
\bigg\vert\partial f(t,\mathbf{z}) + \sum_{i=1}^m\eta\,\bar{\mathbf{z}}^{\alpha_i} \partial_{t,\mathbf{z}}(\mathbf{z}^{\alpha_i})\bigg\vert = \inf_{\eta\in\mathbb{C}} \bigg\vert \partial f(t,\mathbf{z}) + \eta\, \partial_{t,\mathbf{z}}\rho(t,\mathbf{z})\bigg\vert
\end{equation}
as $V(f)\setminus \Sigma f\ni (t,\mathbf{z})\to (0,\mathbf{0})$. 

Now, by condition (iii) of Definition \ref{def-adm}, we have
\begin{equation}\label{r2}
\vert \partial_{t,z_1} f(t,\mathbf{z})\vert \lesssim \sum_{i=1}^m \vert \mathbf{z}^{\alpha_i}\vert
\end{equation}
near the origin. Indeed, let us first show that $\vert \partial_{z_1} f(t,\mathbf{z})\vert \lesssim \sum_{i=1}^m \vert \mathbf{z}^{\alpha_i}\vert$.
We argue by contradiction. Expand $\partial_{z_1} f(t,\mathbf{z})$ as 
\begin{equation*}
\partial_{z_1} f(t,\mathbf{z})=\sum_{\beta=(b_1,\ldots,b_n)} c_\beta(t)z_1^{b_1-1}z_2^{b_2}\cdots z_n^{b_n},
\end{equation*}
 and by the curve selection lemma, suppose that there exists an analytic curve 
\begin{equation*}
s\in [0,\varepsilon)\mapsto (t(s),\mathbf{z}(s))=(t(s),z_1(s),\ldots,z_n(s))
\end{equation*}
such that $(t(s),\mathbf{z}(s))\to (0,\mathbf{0})$ and
\begin{equation*}
\bigg\vert\sum_{\beta=(b_1,\ldots,b_n)}  \frac{c_\beta(t(s)) z_1(s)^{b_1-1}z_2(s)^{b_2}\cdots z_n(s)^{b_n}}{\sum_{i=1}^m \vert z_2(s)^{a_{i2}}\cdots z_n(s)^{a_{in}}\vert}\bigg\vert\to\infty
\end{equation*}
as $s\to 0$. Then, necessarily, there exists $a_{ij}$ ($j\geq 2$) such that $a_{ij}>b_j$, which contradicts the condition (iii) of Definition \ref{def-adm}.
To establish that a similar inequality holds for $\vert \partial_{t} f(t,\mathbf{z})\vert$, we proceed analogously. As before, the terms in $\vert \partial_{t} f(t,\mathbf{z})\vert$ involving the variable $z_1$ are governed by condition (iii) of Definition \ref{def-adm}. The remaining terms, which do not contain $z_1$, may lie either above or on the Newton boundary $\Gamma(f_t)$. Those not situated on $\Gamma(f_t)$ are asymptotically negligible in comparison to the dominant monomials $\mathbf{z}^{\alpha_i}$ located on $\Gamma(f_t)$. The terms positioned on $\Gamma(f_t)$ are asymptotically equivalent to one of the monomials $\mathbf{z}^{\alpha_i}$.

Combined with \eqref{r1}, the inequality \eqref{r2} implies
\begin{equation}\label{mr3}
\begin{aligned}
\vert \partial_{t,z_1} f(t,\mathbf{z})\vert 
& \stackrel{\text{\eqref{r2}}}{\lesssim} \sum_{i=1}^m \vert \mathbf{z}^{\alpha_i}\vert \\
& \stackrel{\text{\eqref{r1}}}{\ll}
\inf_{\eta\in\mathbb{C}} \bigg\vert \partial f(t,\mathbf{z}) + \eta\, \partial_{t,\mathbf{z}}\rho(t,\mathbf{z})\bigg\vert = \frac{\vert\partial f(t,\mathbf{z})\wedge \partial_{t,\mathbf{z}}\rho(t,\mathbf{z})\vert}{\vert\partial_{t,\mathbf{z}}\rho(t,\mathbf{z})\vert} 
\end{aligned}
\end{equation}
as $V(f)\setminus \Sigma f\ni (t,\mathbf{z})\to (0,\mathbf{0})$.

\subsection*{Step 3 -- Conclusion}

The above relation implies that the pair of strata 
\begin{equation*}
(V(f)\setminus \Sigma f,\Sigma f)
\end{equation*}
 is Bekka $(c)$-regular at $(0,\mathbf{0})$ with respect to the control function $\rho$. The argument is completely similar to that used by Abderrahmane in \cite[Theorem~2.1]{A}. 
Indeed, put $X:=V(f)\setminus \Sigma f$, and as in \cite{A}, decompose $\partial_{t,\mathbf{z}}\rho(t,\mathbf{z})$ as 
\begin{equation*}
\partial_{t,\mathbf{z}}\rho(t,\mathbf{z})=\partial_{t,\mathbf{z}}\rho_{\vert X}(t,\mathbf{z})+\partial_{t,\mathbf{z}}\rho_{\vert N}(t,\mathbf{z}),
\end{equation*}
where $N$ denotes the normal space to $X$. The discussion at the bottom of page 169 in \cite{A} shows that
\begin{equation}\label{Ad}
\partial_{t,\mathbf{z}}\rho(t,\mathbf{z}) = \partial_{t,\mathbf{z}}\rho_{\vert X}(t,\mathbf{z}) + A(t,\mathbf{z}) \partial f(t,\mathbf{z}),
\end{equation}
so that
\begin{equation}\label{Ad2}
\partial_{t,\mathbf{z}}\rho_{\vert X}(t,\mathbf{z})=-A(t,\mathbf{z})
\bigg(\partial_{t} f(t,\mathbf{z}), \partial_{z_1} f(t,\mathbf{z}), -\frac{\partial_{\tilde{\mathbf{z}}}\rho(t,\mathbf{z})}{A(t,\mathbf{z})}+\partial_{\tilde{\mathbf{z}}}f(t,\mathbf{z})\bigg),
\end{equation}
where $A(t,\mathbf{z}):=\langle\partial_{t,\mathbf{z}}\rho(t,\mathbf{z}), \partial f(t,\mathbf{z})\rangle / \vert\partial f(t,\mathbf{z})\vert^2$. From \eqref{Ad}, we deduce that
\begin{equation}\label{Ad3}
\begin{aligned}
\vert\partial_{t,\mathbf{z}}\rho_{\vert X}(t,\mathbf{z})\vert^2 
& = \frac{\vert \partial_{t,\mathbf{z}}\rho(t,\mathbf{z})\vert^2\, \vert \partial f(t,\mathbf{z})\vert^2 - \vert \langle\partial_{t,\mathbf{z}}\rho(t,\mathbf{z}), \partial f(t,\mathbf{z})\rangle \vert^2}{\vert\partial f(t,\mathbf{z})\vert^2} \\
& = \frac{\vert\partial f(t,\mathbf{z})\wedge \partial_{t,\mathbf{z}}\rho(t,\mathbf{z})\vert^2}{\vert\partial f(t,\mathbf{z})\vert^2}.
\end{aligned}
\end{equation}
It is then straightforward to check that, when combined with \eqref{mr3}, the relations \eqref{Ad2} and \eqref{Ad3} yield to the new relations
\begin{equation*}
\vert A(t,\mathbf{z})\vert \cdot \vert\partial_{t,z_1} f(t,\mathbf{z})\vert \ll \vert \partial_{t,\mathbf{z}}\rho_{\vert X}(t,\mathbf{z})\vert
\quad\mbox{and}\quad
\vert \partial_{t,z_1} f(t,\mathbf{z})\vert\ll \vert \partial_{\tilde{\mathbf{z}}} f(t,\mathbf{z})\vert
\end{equation*}
as $X\ni (t,\mathbf{z})\to (0,\mathbf{0})$. Now, by \cite[Theorem 1]{A2}, these two relations are equivalent to saying that $X:=V(f)\setminus \Sigma f$ is Bekka $(c)$-regular over $\Sigma f$ at the point $(0,\mathbf{0})$ with respect to the control function $\rho$.

\bibliographystyle{amsplain}

\end{document}